\date{}
\renewcommand{\uppercasenonmath}[1]{}
\theoremstyle{plain}
\newtheorem{theorem}{Theorem}[section]
\newtheorem{proposition}[theorem]{Proposition}
\newtheorem{lemma}[theorem]{Lemma}
\newtheorem{corollary}[theorem]{Corollary}
\newtheorem{example}[theorem]{Example}
\newtheorem*{open question}{Open Question}
\newtheorem{definition}[theorem]{Definition}
\theoremstyle{definition}
\theoremstyle{remark}
\newtheorem{remark}[theorem]{Remark}
\newcommand{\Tor}{\mbox{\rm Tor}}
\newcommand{\C}{\mathcal{C}}
\def\p{\frak p}
\def\m{\frak m}
\def\Hom{{\rm Hom}}
\def\Tor{{\rm Tor}}
\def\Ker{{\rm Ker}}
\def\Im{{\rm Im}}
\def\Coker{{\rm Coker}}
\def\Ann{{\rm Ann}}
\def\Max{{\rm Max}}
\def\Spec{{\rm Spec}}
\def\Max{{\rm Max}}
\begin{document}
\begin{center}
{\large  \bf The $S$-weak global dimension of commutative rings}

\vspace{0.5cm}   Xiaolei Zhang

{\footnotesize  Department of Basic Courses, Chengdu Aeronautic Polytechnic, Chengdu 610100, China\\

E-mail: zxlrghj@163.com\\}
\end{center}

\bigskip
\centerline { \bf  Abstract}
\bigskip
\leftskip10truemm \rightskip10truemm \noindent

In this paper, we introduce and study the $S$-weak global dimension $S$-w.gl.dim$(R)$ of a commutative ring $R$ for some multiplicative subset $S$ of $R$. Moreover, commutative rings with  $S$-weak global dimension at most $1$ are studied. Finally, we investigated the $S$-weak global dimension of factor rings and  polynomial rings.
\vbox to 0.3cm{}\\
{\it Key Words:}    $S$-flat modules, $S$-flat dimensions,  $S$-weak global dimensions.\\
{\it 2010 Mathematics Subject Classification:}  13D05, 13D07, 13B25.

\leftskip0truemm \rightskip0truemm
\bigskip

Throughout this article, $R$ always is a commutative ring with unity $1$ and $S$ always is a multiplicative subset of $R$, that is, $1\in S$ and $s_1s_2\in S$ for any $s_1\in S, s_2\in S$. In 2002,  Anderson and Dumitrescu \cite{ad02} defined $S$-Noetherian rings $R$ for which any ideal of $R$ is $S$-finite. Recall from \cite{ad02} that an $R$-module $M$ is called $S$-finite provided that $sM\subseteq F$ for some $s\in S$ and some finitely generated submodule $F$ of $M$. An $R$-module $T$ is called uniformly $S$-torsion if $sT=0$ for some $s\in S$ in \cite{zwz21}. So  an $R$-module $M$ is  $S$-finite if and only if $M/F$ is uniformly $S$-torsion for some finitely generated submodule $F$ of $M$. The idea derived from  uniformly $S$-torsion modules  is deserved to be further investigated. In \cite{zwz21}, the author of this paper introduced the class of $S$-flat modules $F$ for which the functor $F\otimes_R-$ preserves $S$-exact sequences. The class of  $S$-flat modules can be seen as a ``uniform'' generalization of that of flat modules, since an $R$-module $F$ is $S$-flat if and only if  $\Tor^R_1(F,M)$ is  uniformly $S$-torsion for any  $R$-module $M$ (see \cite[Theorem 3.2]{zwz21}). The class of  $S$-flat modules owns the following $S$-hereditary property: let $0\rightarrow A\xrightarrow{f} B\xrightarrow{g} C\rightarrow 0$ be an  $S$-exact sequence, if $B$ and $C$ are  $S$-flat so is $A$ (see \cite[Proposition 3.4]{zwz21}). So it is worth to study the  $S$-analogue of flat dimensions of $R$-modules and $S$-analogue of weak global dimension of commutative rings.

In this article, we define the $S$-flat dimension $S$-$fd_R(M)$ of an  $R$-module $M$ to be the length of the shortest $S$-flat $S$-resolution of $M$.  We characterize  $S$-flat dimensions of $R$-modules using the  uniform torsion property of  the ``Tor''  functors  in Proposition  \ref{w-g-flat}. Besides,  we obtain a new local characterization of flat dimensions of $R$-modules (see Corollary \ref{wgld-swgld}).  The $S$-weak global dimension $S$-w.gl.dim$(R)$ of a commutative ring $R$  is defined to be  the supremum of $S$-flat dimensions of all $R$-modules. A characterization of $S$-weak global dimensions is given in Proposition \ref{w-g-flat}. Examples of rings $R$ for which $S$-w.gl.dim$(R)\not=$w.gl.dim$(R_S)$ can be found in Example \ref{s-wgld-1-not-wgld-1}. $S$-von Neumann regular rings are firstly introduced in \cite{zwz21}  for which  there exists  $s\in S$ such that  for any $a\in R$ there exists  $r\in R$ such that $sa=ra^2$. By \cite[Theorem 3.11]{zwz21}, a ring $R$ is $S$-von Neumann regular if and only if all $R$-modules are $S$-flat. So $S$-von Neumann regular rings are exactly commutative rings with $S$-weak global dimension equal to $0$ (see Corollary \ref{s-vn-ext-char}). We also study commutative rings $R$ with  $S$-w.gl.dim$(R)$ at most $1$.  The nontrivial example of commutative rings with $S$-w.gl.dim$(R)\leq 1$ but infinite weak global dimension is given in Example \ref{s-wgld-1-not-wgld-1}. In the final section, we investigate the $S$-weak global dimensions of factor rings and  polynomial rings and show that $S$-w.gl.dim$(R[x])=S$-w.gl.dim$(R)+1$ (see Theorem \ref{s-wgd-poly}).

\section{Preliminaries}

Recall from \cite{zwz21}, an $R$-module $T$ is called a uniformly $S$-torsion module  provided that there exists an element $s\in S$ such that $sT=0$.
An $R$-sequence  $M\xrightarrow{f} N\xrightarrow{g} L$ is called  \emph{$S$-exact} (at $N$) provided that there is an element $s\in S$ such that $s\Ker(g)\subseteq \Im(f)$ and $s\Im(f)\subseteq \Ker(g)$. We say a long $R$-sequence $...\rightarrow A_{n-1}\xrightarrow{f_n} A_{n}\xrightarrow{f_{n+1}} A_{n+1}\rightarrow...$ is $S$-exact, if for any $n$ there is an element $s\in S$ such that $s\Ker(f_{n+1})\subseteq \Im(f_n)$ and $s\Im(f_n)\subseteq \Ker(f_{n+1})$. An $S$-exact sequence $0\rightarrow A\rightarrow B\rightarrow C\rightarrow 0$ is called a short $S$-exact sequence. An $R$-homomorphism $f:M\rightarrow N$ is an \emph{$S$-monomorphism}  $($resp.,   \emph{$S$-epimorphism}, \emph{$S$-isomorphism}$)$ provided $0\rightarrow M\xrightarrow{f} N$   $($resp., $M\xrightarrow{f} N\rightarrow 0$, $0\rightarrow M\xrightarrow{f} N\rightarrow 0$ $)$ is   $S$-exact.
It is easy to verify an  $R$-homomorphism $f:M\rightarrow N$ is an $S$-monomorphism $($resp., $S$-epimorphism, $S$-isomorphism$)$ if and only if $\Ker(f)$ $($resp., $\Coker(f)$, both $\Ker(f)$ and $\Coker(f)$$)$ is a  uniformly $S$-torsion module.

\begin{proposition}\label{s-iso-inv}
Let $R$ be a ring and $S$ a multiplicative subset of $R$. Suppose there is an $S$-isomorphism $f:M\rightarrow N$ for  $R$-modules  $M$ and $N$. Then there is an $S$-isomorphism $g:N\rightarrow M$.
\end{proposition}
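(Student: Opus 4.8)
The plan is to exploit the defining characterization of an $S$-isomorphism stated just before the proposition: $f$ being an $S$-isomorphism means both $\Ker(f)$ and $\Coker(f)$ are uniformly $S$-torsion. So first I would extract two witnesses: an element $s\in S$ with $s\Ker(f)=0$, and an element $t\in S$ with $tN\subseteq \Im(f)$ (the latter because $\Coker(f)=N/\Im(f)$ is uniformly $S$-torsion). These two elements are precisely what is needed to build a map backwards: the element $t$ lets me pull any $n\in N$ back into the image of $f$, and the element $s$ repairs the failure of injectivity.

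Concretely, I would define $g\colon N\to M$ as follows. Given $n\in N$, since $tn\in\Im(f)$ choose some $m_n\in M$ with $f(m_n)=tn$, and set $g(n)=s\,m_n$. The crucial point—and the step I expect to require the most care—is well-definedness: a different choice $m_n'$ satisfies $f(m_n-m_n')=0$, so $m_n-m_n'\in\Ker(f)$, whence $s(m_n-m_n')=0$ and thus $s\,m_n=s\,m_n'$. This is exactly where the hypothesis $s\Ker(f)=0$ is consumed, and it is the heart of the argument. Checking that $g$ is $R$-linear is then routine: for $n_1,n_2$ one may take $m_{n_1+n_2}=m_{n_1}+m_{n_2}$, and for $r\in R$ one may take $m_{rn}=r\,m_n$, so additivity and $R$-scaling follow directly from the definition.

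Once $g$ is in hand, I would verify that it is an $S$-isomorphism by computing the two composites and showing each is multiplication by $st\in S$. For $f\circ g$: since $f(g(n))=f(s\,m_n)=s\,f(m_n)=s(tn)=(st)n$, we get $f\circ g=(st)\,\Id_N$. For $g\circ f$: given $m\in M$ we have $t\,f(m)=f(tm)$, so $tm$ serves as a valid preimage and $g(f(m))=s(tm)=(st)m$, giving $g\circ f=(st)\,\Id_M$. From $f\circ g=(st)\Id_N$, any $n\in\Ker(g)$ satisfies $(st)n=f(g(n))=0$, so $(st)\Ker(g)=0$; from $g\circ f=(st)\Id_M$, every $m\in M$ has $(st)m=g(f(m))\in\Im(g)$, so $(st)\Coker(g)=0$. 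Since $st\in S$, both $\Ker(g)$ and $\Coker(g)$ are uniformly $S$-torsion, which is precisely the criterion for $g$ to be an $S$-isomorphism. The only genuine obstacle is the well-definedness of $g$; everything afterward is a short bookkeeping of the two torsion witnesses.
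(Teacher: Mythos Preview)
Your proof is correct and follows essentially the same construction as the paper: both build $g$ by first multiplying into $\Im(f)$ and then lifting along $f$, with the kernel witness ensuring well-definedness. Your verification via $f\circ g=(st)\Id_N$ and $g\circ f=(st)\Id_M$ is a slightly cleaner packaging of the same kernel/cokernel estimates the paper carries out directly with $s^2$.
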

\begin{proof} Consider the following commutative diagram:
$$\xymatrix{
0\ar[r]^{}& \Ker(f)\ar[r]^{} &M\ar[rr]^{f}\ar@{->>}[rd] &&N \ar[r]^{} & \Coker(f)\ar[r]^{} &  0\\
  & & &\Im(f) \ar@{^{(}->}[ru] &&  &   \\}$$
with $s\Ker(f)=0$ and $sN\subseteq \Im(f)$ for some $s\in S$. Define $g_1:N\rightarrow \Im(f)$ where $g_1(n)=sn$ for any $n\in N$. Then $g_1$ is a well-defined $R$-homomorphism since $sn\in \Im(f)$. Define $g_2:\Im(f)\rightarrow M$ where $g_2(f(m))=sm$. Then  $g_2$ is  well-defined $R$-homomorphism. Indeed, if $f(m)=0$, then $m\in \Ker(f)$ and so $sm=0$. Set $g=g_2\circ g_1:N\rightarrow M$. We claim that $g$ is an  $S$-isomorphism. Indeed, let $n$ be an element in $\Ker(g)$. Then $sn=g_1(n)\in \Ker(g_2)$. Note that $s\Ker(g_2)=0$. Thus $s^2n=0$. So $s^2\Ker(g)=0$. On the other hand, let $m\in M$. Then $g(f(m))=g_2\circ g_1(f(m))=g_2(f(sm))=s^2m$. Thus $s^2m\in \Im(g)$. So $s^2M\subseteq \Im(g)$. It follows that $g$ is  an $S$-isomorphism.
\end{proof}

\begin{remark}
Let $R$ be a ring, $S$ a multiplicative subset of $R$ and $M$ and $N$ $R$-modules. Then the condition ``there is an $R$-homomorphism $f:M\rightarrow N$  such that $f_S:M_S\rightarrow N_S$ is an isomorphism''  does not mean ``there is an $R$-homomorphism $g:N\rightarrow M$  such that $g_S:N_S\rightarrow M_S$ is an isomorphism''.

Indeed, let $R=\mathbb{Z}$ be the ring of integers, $S=R-\{0\}$  and $\mathbb{Q}$ the quotient field of integers. Then the embedding map $f:\mathbb{Z}\hookrightarrow \mathbb{Q}$ satisfies $f_S:\mathbb{Q}\rightarrow \mathbb{Q}$ is an isomorphism. However, since $\Hom_\mathbb{Z}(\mathbb{Q},\mathbb{Z})=0$, there does not exist any $R$-homomorphism $g:\mathbb{Q}\rightarrow \mathbb{Z}$  such that $g_S:\mathbb{Q}\rightarrow \mathbb{Q}$ is an isomorphism.
\end{remark}

Let $R$ be a ring and  $S$ a multiplicative subset of $R$. Suppose $M$ and $N$ are $R$-modules. We say $M$ is $S$-isomorphic to $N$ if there exists an $S$-isomorphism $f:M\rightarrow N$. A family $\C$  of $R$-modules  is said to be closed under $S$-isomorphisms if $M$ is $S$-isomorphic to $N$ and $M$ is in $\C$, then $N$ is  also in  $\C$. It follows from Proposition \ref{s-iso-inv} that the existence of $S$-isomorphisms of two $R$-modules is an equivalence relation. Next, we give an $S$-analogue of Five Lemma.

\begin{theorem}{\bf ($S$-analogue of Five Lemma)}\label{s-5-lemma}
Let $R$ be a ring, $S$ a multiplicative subset of $R$. Consider the following diagram with $S$-exact rows:
$$\xymatrix@R=20pt@C=20pt{
A\ar[d]_{f_A} \ar[r]^{g_1} & B\ar[d]_{f_B}\ar[r]^{g_2} &C\ar[d]^{f_C}\ar[r]^{g_3} &D\ar[r]^{g_4}\ar[d]^{f_D}&E\ar[d]^{f_E} \\
A'\ar[r]^{h_1} &B'\ar[r]^{h_2}  & C'  \ar[r]^{h_3} & D' \ar[r]^{h_4} & E' .\\}$$
\begin{enumerate}
\item  If $f_B$ and $f_D$ are $S$-monomorphisms and $f_A$ is an $S$-epimorphism, then $f_C$ is an $S$-monomorphism.
\item If $f_B$ and $f_D$ are $S$-epimorphisms and $f_E$ is an $S$-monomorphism, then $f_C$ is an $S$-epimorphism.
\item If $f_A$ is an $S$-epimorphism, $f_E$ is an $S$-monomorphism, and $f_B$ and $f_D$ are $S$-isomorphisms, then $f_C$ is an $S$-isomorphism.
\item  If $f_A$, $f_B$, $f_D$ and $f_E$ are all $S$-isomorphisms, then $f_C$ is an $S$-isomorphism.
\end{enumerate}
\end{theorem}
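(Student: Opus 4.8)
The plan is to prove parts (1) and (2) by $S$-analogues of the classical diagram chase and then deduce (3) and (4) formally. Since a map is an $S$-monomorphism (resp.\ $S$-epimorphism, $S$-isomorphism) precisely when $\Ker$ (resp.\ $\Coker$, both) is uniformly $S$-torsion, it suffices in (1) to exhibit a single $t\in S$ with $t\Ker(f_C)=0$, and in (2) a single $t\in S$ with $tC'\subseteq\Im(f_C)$.

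For (1), I would first record the finitely many elements of $S$ supplied by the hypotheses: an $s_A\in S$ with $s_A A'\subseteq\Im(f_A)$ (from $f_A$ being an $S$-epimorphism), elements $s_B,s_D\in S$ annihilating $\Ker(f_B)$ and $\Ker(f_D)$ (from $f_B,f_D$ being $S$-monomorphisms), and elements $s_1,s_2,s_3\in S$ witnessing the $S$-exactness at $C$ in the top row, at $B'$ in the bottom row, and at $B$ in the top row. Starting from an arbitrary $c\in\Ker(f_C)$, I would run the usual chase, but each time exactness or injectivity is invoked I multiply by the corresponding $s$ instead of passing to an honest equality: commutativity gives $g_3(c)\in\Ker(f_D)$, so $s_D c\in\Ker(g_3)$; $S$-exactness at $C$ yields $s_1 s_D c=g_2(b)$ for some $b\in B$; pushing down, using $S$-exactness at $B'$ and the $S$-surjectivity of $f_A$ produces $a\in A$ with $s_A s_2 b-g_1(a)\in\Ker(f_B)$; applying $s_B$ and then $g_2$, the $S$-exactness at $B$ forces the relevant image to vanish. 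Tracking the factors through, one arrives at $(s_1 s_2 s_3 s_A s_B s_D)\,c=0$.

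The essential point, and the place where the $S$-version departs from the classical one, is that $s_A,s_B,s_D,s_1,s_2,s_3$ are all fixed in advance by the hypotheses and are independent of $c$; hence $t:=s_1 s_2 s_3 s_A s_B s_D\in S$ annihilates every element of $\Ker(f_C)$ at once, giving $t\Ker(f_C)=0$, so $f_C$ is an $S$-monomorphism. Part (2) is proved by the dual chase: from an arbitrary $c'\in C'$ I use the $S$-surjectivity of $f_D$ and $f_B$, the $S$-injectivity of $f_E$, and the $S$-exactness at $D$, at $D'$, and at $C'$ to manufacture, after carrying along a fixed product $t\in S$ of the witnessing elements, an element of $C$ with $tc'\in\Im(f_C)$; uniformity of $t$ in $c'$ then yields $tC'\subseteq\Im(f_C)$, so $f_C$ is an $S$-epimorphism.

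Finally, (3) is immediate from (1) and (2): under its hypotheses $f_C$ is simultaneously an $S$-monomorphism and an $S$-epimorphism, hence an $S$-isomorphism. Statement (4) is the special case of (3) in which $f_B,f_D$ are $S$-isomorphisms while $f_A$ and $f_E$ are, in particular, an $S$-epimorphism and an $S$-monomorphism respectively. The main obstacle is purely bookkeeping rather than conceptual: one must check that multiplying through by the various $s_i$ never spoils an equality already obtained (it does not, since every map is $R$-linear and $S$ is multiplicatively closed) and, crucially, that the resulting annihilator in (1) and denominator in (2) is one element of $S$ depending only on the diagram and not on the chased element.
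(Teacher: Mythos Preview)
Your proposal is correct and follows essentially the same approach as the paper: a diagram chase for (1) and (2) in which each classical exactness/injectivity/surjectivity step is replaced by multiplication by a fixed witness from $S$, with the key observation being that these witnesses depend only on the diagram and not on the chased element, so their product gives a single uniform $t\in S$; parts (3) and (4) are then deduced formally exactly as you describe.
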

\begin{proof} (1) Let $x\in \Ker(f_C)$. Then $f_Dg_3(x)=h_3f_C(x)=0$. Since  $f_D$ is an $S$-monomorphism, $s_1\Ker(f_D)=0$ for some $s_1\in S$. So $s_1g_3(x)=g_3(s_1x)=0$. Since the top row is $S$-exact, there exists $s_2\in S$ such that $s_2\Ker(g_3)\subseteq \Im(g_2)$. Thus there exists $b\in B$ such that $g_2(b)=s_2s_1x$. Hence $h_2f_B(b)=f_Cg_2(b)=f_C(s_2s_1x)=0$. Thus there exists $s_3\in S$ such that $s_3\Ker(h_2)\subseteq \Im(h_1)$. So there exists
$a'\in A'$ such that  $h_1(a')=s_2f_B(b)$. Since $f_A$ is an $S$-epimorphism, there exists $s_4\in S$ such that $s_4A'\subseteq \Im(f_A)$. So there there exists $a\in A$ such that $s_4a'=f_A(a)$. Hence $s_4s_2f_B(b)=s_4h_1(a')=h_1(f_A(a))=f_B(g_1(a))$. So $s_4s_2b-g_1(a)\in \Ker(f_B)$.  Since $f_B$ is an $S$-monomorphism, there exists $s_5\in S$ such that $s_5\Ker(f_B)=0$.  Thus  $s_5(s_4s_2b-g_1(a))=0$. So  $s_5s_4s_2s_2s_1x=s_5(g_2(s_4s_2b))=s_5g_2(g_1(a))$. Since the top row is $S$-exact at $B$, then there exists $s_6\in S$ such that $s_6\Im(g_1)\subseteq \Ker(g_2)$. So $s_6s_5s_4s_2s_2s_1x=s_5g_2(s_6g_1(a))=0$. Consequently, if we set $s=s_6s_5s_4s_2s_2s_1$, then  $s\Ker(f_C)=0$. It follows that $f_C$ is an $S$-monomorphism.

 (2) Let $x\in C'$. Since $f_D$ is an $S$-epimorphism, then there exists $s_1\in S$ such that $s_1D'\subseteq \Im(f_D)$. Thus there exists $d\in D$ such that $f_D(d)=s_1h_3(x)$. By the commutativity of the right square, we have $f_Eg_4(d)=h_4f_D(d)=s_1h_4(h_3(x))$. Since the bottom row is $S$-exact at $D'$, there exists $s_2\in S$ such that $s_4\Im(h_3)\subseteq \Ker(h_4)$. So $s_4f_E(g_4(d))=s_1h_4(s_4h_3(x))=0$. Since $f_E$ is an $S$-monomorphism, there exists $s_3\in S$ such that $s_3\Ker(f_E)=0$. Thus $s_3s_4g_4(d)=0$. Since the top row is $S$-exact at $D$, there there exists $s_5\in S$ such that $s_5\Ker(g_4)\subseteq \Im(g_3)$. So there exists $c\in C$ such that $s_5s_3s_4d=g_3(c)$. Hence $s_5s_3s_4f_D(d)=f_D(g_3(c))=h_3(f_C(c))$. Since $s_5s_3s_4f_D(d)=h_3(s_1s_5s_3s_4x)$, we have $f_C(c)-s_1s_5s_3s_4x\in \Ker(h_3)$. Since the bottom row is $S$-exact at $C'$, there exists $s_6\in S$ such that $s_6\Ker(h_3)\subseteq \Im(h_2)$. Thus there exists $b'\in B'$ such that $s_6(f_C(c)-s_1s_5s_3s_4x)=h_2(b')$. Since $f_B$ is an $S$-epimorphism, then there exists $s_7\in S$ such that $s_7B'\subseteq \Im(f_B)$. So $s_7b'=f_B(b)$ for some $b\in B$. Thus $f_C(g_2(b))=h_2(f_B(b))=s_7h_2(b')=s_7(s_6(f_C(c)-s_1s_5s_3s_4x))$. So $s_7s_6s_1s_5s_3s_4x=s_7s_6f_C(c)-f_C(g_2(b))=f_C(s_7s_6c-g_2(b))\in\Im(f_C)$. Consequently, if we set $s=s_7s_6s_1s_5s_3s_4$, then $sC'\subseteq \Im(f_C)$.  It follows that $f_C$ is an $S$-epimorphism.

It is easy to see (3) follows from (1) and (2), while (4) follows from (3).
\end{proof}

Recall from \cite[Definition 3.1]{zwz21} that
an $R$-module $F$ is called  $S$-flat provided that for any  $S$-exact sequence $0\rightarrow A\rightarrow B\rightarrow C\rightarrow 0$, the induced sequence $0\rightarrow A\otimes_RF\rightarrow B\otimes_RF\rightarrow C\otimes_RF\rightarrow 0$ is  $S$-exact. It is easy to verify that the class of  $S$-flat modules is closed under $S$-isomorphisms by the following result.
\begin{lemma}\label{s-flat-ext} \cite[Theorem 3.2]{zwz21}
Let $R$ be a ring, $S$ a multiplicative subset of $R$ and $F$ an $R$-module. The following assertions are equivalent:
\begin{enumerate}
\item  $F$ is  $S$-flat;

\item for any short exact sequence $0\rightarrow A\xrightarrow{f} B\xrightarrow{g} C\rightarrow 0$, the induced sequence $0\rightarrow A\otimes_RF\xrightarrow{f\otimes_RF} B\otimes_RF\xrightarrow{g\otimes_RF} C\otimes_RF\rightarrow 0$ is  $S$-exact;

\item  $\Tor^R_1(M,F)$ is  uniformly $S$-torsion for any  $R$-module $M$;

\item  $\Tor^R_n(M,F)$ is  uniformly $S$-torsion for any  $R$-module $M$ and $n\geq 1$.

\end{enumerate}
\end{lemma}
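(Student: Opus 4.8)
The plan is to establish the cyclic chain of implications $(1)\Rightarrow(2)\Rightarrow(3)\Rightarrow(4)\Rightarrow(1)$, mirroring the classical characterization of flat modules but replacing each exactness/vanishing condition by its uniform $S$-torsion analogue. The implication $(1)\Rightarrow(2)$ is immediate, since every short exact sequence is in particular $S$-exact (take $s=1$ in the definition), so the defining property of $S$-flatness specializes to (2).

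For $(2)\Rightarrow(3)$, fix an $R$-module $M$ and choose a short exact sequence $0\to K\to P\to M\to 0$ with $P$ free (hence flat). Tensoring with $F$ and using $\Tor_1^R(P,F)=0$, the long exact sequence identifies $\Tor_1^R(M,F)$ with the kernel of $K\otimes_R F\to P\otimes_R F$. By (2) this latter map is an $S$-monomorphism, and since an $R$-homomorphism is an $S$-monomorphism precisely when its kernel is uniformly $S$-torsion (as recorded just before Proposition \ref{s-iso-inv}), we conclude $\Tor_1^R(M,F)$ is uniformly $S$-torsion. For $(3)\Rightarrow(4)$ I would argue by induction on $n$, the case $n=1$ being the hypothesis; with the same sequence $0\to K\to P\to M\to 0$ and dimension shifting, $\Tor_n^R(M,F)\cong\Tor_{n-1}^R(K,F)$ for $n\ge 2$, and the inductive hypothesis applied to $K$ finishes the step.

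The substance of the lemma lies in $(4)\Rightarrow(1)$. Given an $S$-exact sequence $0\to A\xrightarrow{f}B\xrightarrow{g}C\to 0$, I must show the tensored sequence $0\to A\otimes_R F\to B\otimes_R F\to C\otimes_R F\to 0$ is $S$-exact, which amounts to three checks. For the \emph{$S$-epimorphism on the right}, since $\Coker(g)$ is uniformly $S$-torsion and $\Coker(g\otimes_R F)\cong\Coker(g)\otimes_R F$ by right exactness of tensoring, the right-hand map is an $S$-epimorphism. For the \emph{$S$-monomorphism on the left}, factor $f$ as $A\twoheadrightarrow\Im(f)\hookrightarrow B$; the surjection has uniformly $S$-torsion kernel $\Ker(f)$, so $A\otimes_R F\to \Im(f)\otimes_R F$ is an $S$-isomorphism, while the kernel of $\Im(f)\otimes_R F\to B\otimes_R F$ is a homomorphic image of $\Tor_1^R(B/\Im(f),F)$, hence uniformly $S$-torsion by (4); composing shows $f\otimes_R F$ is an $S$-monomorphism.

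The hard part will be \emph{$S$-exactness at the middle term} $B\otimes_R F$, where one must reconcile the genuine submodules $\Im(f)$ and $\Ker(g)$ of $B$ with the weaker $S$-relations $s_2\Im(f)\subseteq\Ker(g)$ and $s_2\Ker(g)\subseteq\Im(f)$ supplied by $S$-exactness. My plan is to tensor the genuine exact sequence $0\to\Ker(g)\to B\to\Im(g)\to 0$ with $F$, so that the image of $\Ker(g)\otimes_R F\to B\otimes_R F$ equals the kernel of $B\otimes_R F\to \Im(g)\otimes_R F$, and then use that $\Im(g)\otimes_R F\to C\otimes_R F$ has uniformly $S$-torsion kernel (again by (4)) to obtain $s_3\Ker(g\otimes_R F)\subseteq\Im(\Ker(g)\otimes_R F\to B\otimes_R F)$ for some $s_3\in S$. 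Finally, multiplying simple tensors by $s_2$ transports the images of $\Ker(g)\otimes_R F$ and $\Im(f)\otimes_R F$ into one another inside $B\otimes_R F$, that is, $s_2\Im(\Ker(g)\otimes_R F)\subseteq\Im(f\otimes_R F)$ and conversely; combining these uniform bounds produces a single $s\in S$ witnessing $S$-exactness at $B\otimes_R F$, which completes $(4)\Rightarrow(1)$ and closes the cycle.
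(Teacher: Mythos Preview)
The paper does not supply its own proof of this lemma; it is quoted verbatim as \cite[Theorem 3.2]{zwz21} and used as a black box. Your argument is correct and follows the natural route one would expect: the implications $(1)\Rightarrow(2)\Rightarrow(3)\Rightarrow(4)$ are routine, and for $(4)\Rightarrow(1)$ you correctly decompose the $S$-exact sequence into its honest short exact pieces $0\to\Ker(f)\to A\to\Im(f)\to 0$, $0\to\Ker(g)\to B\to\Im(g)\to 0$, $0\to\Im(g)\to C\to\Coker(g)\to 0$, use hypothesis (4) to control the $\Tor_1$ terms that obstruct exactness after tensoring with $F$, and then multiply by the $s_2$ coming from $s_2\Ker(g)\subseteq\Im(f)$ and $s_2\Im(f)\subseteq\Ker(g)$ to transport between $\Im(\Ker(g)\otimes_RF\to B\otimes_RF)$ and $\Im(f\otimes_RF)$ inside $B\otimes_RF$. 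Since the paper offers no argument to compare against, there is nothing further to say except that your proof would stand in for the citation without difficulty.
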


The following result says that a short $S$-exact sequence induces a long $S$-exact sequence by the functor ``Tor'' as the classical case.

\begin{theorem}\label{s-iso-tor}
Let $R$ be a ring, $S$ a multiplicative subset of $R$ and $N$  an $R$-module. Suppose $0\rightarrow A\xrightarrow{f} B\xrightarrow{g} C\rightarrow 0$ is an $S$-exact sequence of $R$-modules. Then for any $n\geq 1$ there is an $R$-homomorphism $\delta_n:\Tor_{n}^R(C,N)\rightarrow \Tor_{n-1}^R(A,N)$ such that  the induced sequence
$$...\rightarrow \Tor_{n}^R(A,N)\rightarrow \Tor_{n}^R(B,N)\rightarrow \Tor_{n}^R(C,N)\xrightarrow{\delta_n} \Tor_{n-1}^R(A,N)\rightarrow $$
$$\Tor_{n-1}^R(B,N)\rightarrow ... \rightarrow\Tor_{1}^R(C,N)\xrightarrow{\delta_1} A\otimes_RN\rightarrow B\otimes_RN\rightarrow C\otimes_RN\rightarrow 0$$
 is $S$-exact.
\end{theorem}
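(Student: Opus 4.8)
The plan is to reduce the statement to the classical long exact $\Tor$ sequence attached to a genuine short exact sequence, and then to transport $S$-exactness along $S$-isomorphisms. The device that makes this work is an observation I would record first: if $\phi\colon M\to M'$ is an $S$-isomorphism of $R$-modules, then $\Tor_n^R(\phi,N)\colon \Tor_n^R(M,N)\to\Tor_n^R(M',N)$ is again an $S$-isomorphism for every $n\geq 0$. This follows by factoring $\phi$ through $\Im(\phi)$ as an $S$-epimorphism $M\to\Im(\phi)$ with uniformly $S$-torsion kernel followed by an $S$-monomorphism $\Im(\phi)\hookrightarrow M'$ with uniformly $S$-torsion cokernel, writing the classical $\Tor$ long exact sequences of the two resulting genuine short exact sequences, and using that $\Tor_n^R(T,N)$ is uniformly $S$-torsion whenever $T$ is (indeed $sT=0$ forces $s\,\Tor_n^R(T,N)=0$ by functoriality). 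The connecting pieces of those sequences then have uniformly $S$-torsion kernels and cokernels, so $\Tor_n^R(\phi,N)$ is an $S$-isomorphism.

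Next I would extract from the given sequence the genuine short exact sequence $0\to\Ker(g)\xrightarrow{j} B\xrightarrow{p}\Im(g)\to 0$, where $p$ is the corestriction of $g$ and $j$ the inclusion, and identify its outer terms with $A$ and $C$ up to $S$-isomorphism. On the right, the inclusion $\iota\colon\Im(g)\hookrightarrow C$ has cokernel $\Coker(g)$, which is uniformly $S$-torsion since $g$ is an $S$-epimorphism; hence $\iota$ is an $S$-isomorphism. On the left, choose $s\in S$ witnessing $S$-exactness at $B$, so $s\,\Im(f)\subseteq\Ker(g)$ and $s\,\Ker(g)\subseteq\Im(f)$. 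Multiplication by $s$ then defines an $R$-homomorphism $\Im(f)\to\Ker(g)$ whose kernel is annihilated by $s$ and whose cokernel $\Ker(g)/s\,\Im(f)$ is annihilated by $s^2$ (because $s^2\Ker(g)\subseteq s\,\Im(f)$), so it is an $S$-isomorphism; composing with the $S$-isomorphism $A\to\Im(f)$ induced by $f$ (its kernel $\Ker(f)$ is uniformly $S$-torsion) exhibits $A$ as $S$-isomorphic to $\Ker(g)$. By the first paragraph, $\Tor_n^R(A,N)$ is $S$-isomorphic to $\Tor_n^R(\Ker(g),N)$ and $\Tor_n^R(C,N)$ is $S$-isomorphic to $\Tor_n^R(\Im(g),N)$, compatibly with $f_\ast$, $g_\ast$ and $j_\ast$, $p_\ast$ up to multiplication by the fixed elements of $S$ produced above.

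Finally I would write down the classical long exact $\Tor$ sequence of $0\to\Ker(g)\to B\to\Im(g)\to 0$, with connecting maps $\partial_n\colon \Tor_n^R(\Im(g),N)\to\Tor_{n-1}^R(\Ker(g),N)$, and define $\delta_n$ as the composite of $\partial_n$ with an $S$-inverse of $\iota_\ast\colon\Tor_n^R(\Im(g),N)\to\Tor_n^R(C,N)$ on the input side and an $S$-inverse of $\Tor_{n-1}^R(A,N)\to\Tor_{n-1}^R(\Ker(g),N)$ on the output side, the existence of $S$-inverses being guaranteed by Proposition \ref{s-iso-inv}; this $\delta_n$ is a genuine $R$-homomorphism. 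It then remains to verify $S$-exactness at each spot. Since $g=\iota\circ p$ holds on the nose, the square comparing $g_\ast$ with $p_\ast$ commutes strictly, but the remaining squares commute only up to multiplication by elements of $S$, and I expect this $S$-commutativity to be the main obstacle. I would discharge it by direct bookkeeping at each node: for example, at $\Tor_n^R(B,N)$ the genuine identity $\Im(j_\ast)=\Ker(p_\ast)$ can be pushed through the $S$-isomorphisms to yield $s\,\Im(f_\ast)\subseteq\Ker(g_\ast)$ and $s'\,\Ker(g_\ast)\subseteq\Im(f_\ast)$ for suitable $s,s'\in S$, the extra factors being absorbed into the $S$-exactness witnesses, and the nodes $\Tor_n^R(C,N)$ and $\Tor_{n-1}^R(A,N)$ are handled the same way using the definition of $\delta_n$. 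Alternatively, the $S$-commutative ladder can be fed into the $S$-analogue of the Five Lemma (Theorem \ref{s-5-lemma}), whose proof already tolerates the accumulation of finitely many factors from $S$.
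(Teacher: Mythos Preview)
Your proposal is correct and follows essentially the same strategy as the paper: reduce to the genuine short exact sequence $0\to\Ker(g)\to B\to\Im(g)\to 0$, identify $A$ and $C$ with $\Ker(g)$ and $\Im(g)$ up to $S$-isomorphism (the paper routes the $A\simeq\Ker(g)$ comparison through $s\Ker(g)\hookrightarrow\Im(f)$ and $s\Ker(g)\hookrightarrow\Ker(g)$, you through multiplication by $s\colon\Im(f)\to\Ker(g)$, which amounts to the same thing), and define $\delta_n$ by composing the classical connecting map with $S$-inverses supplied by Proposition~\ref{s-iso-inv}. Your preliminary observation that $\Tor_n^R(-,N)$ sends $S$-isomorphisms to $S$-isomorphisms is exactly what the paper verifies inline, case by case, so your write-up is a mild repackaging rather than a different argument.
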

\begin{proof}
Since the sequence $0\rightarrow A\xrightarrow{f} B\xrightarrow{g} C\rightarrow 0$ is $S$-exact at $B$. There are three exact sequences $0\rightarrow \Ker(f)\xrightarrow{i_{\Ker(f)}} A\xrightarrow{\pi_{\Im(f)}} \Im(f)\rightarrow 0$, $0\rightarrow \Ker(g)\xrightarrow{i_{\Ker(g)}} B\xrightarrow{\pi_{\Im(g)}} \Im(g)\rightarrow 0$ and $0\rightarrow \Im(g)\xrightarrow{i_{\Im(g)}} C\xrightarrow{\pi_{\Coker(g)}} \Coker(g)\rightarrow 0$ with $\Ker(f)$ and $\Coker(g)$ uniformly $S$-torsion. There also exists  $s\in S$ such that $s\Ker(g)\subseteq \Im(f)$ and  $s\Im(f)\subseteq \Ker(g)$. Denote $T=\Ker(f)$ and $T'=\Coker(g)$.

Firstly,  consider the exact sequence
$$\Tor_{n+1}^R(T',N)\rightarrow \Tor_{n}^R(\Im(g),N)\xrightarrow{\Tor_{n}^R(i_{\Im(g)},N)} \Tor_{n}^R(C,N)\rightarrow \Tor_{n}^R(T',N).$$ Since $T'$ is uniformly $S$-torsion, $\Tor_{n+1}^R(T',N)$ and $\Tor_{n}^R(T',N)$  is uniformly $S$-torsion. Thus $\Tor_{n}^R(i_{\Im(g)},N)$ is an $S$-isomorphism. So there is also an $S$-isomorphism $h^n_{\Im(g)}: \Tor_{n}^R(C,N)\rightarrow \Tor_{n}^R(\Im(g),N)$ by Proposition \ref{s-iso-inv}. Consider the exact sequence:
$$ \Tor_{n-1}^R(T,N)\rightarrow \Tor_{n-1}^R(A,N)\xrightarrow{\Tor_{n-1}^R(\pi_{\Im(f)},N)} \Tor_{n-1}^R(\Im(f),N)\rightarrow  \Tor_{n-2}^R(T,N).$$ Since $T$ is uniformly $S$-torsion, we have $\Tor_{n-1}^R(\pi_{\Im(f)},N)$ is an $S$-isomorphism. So there is also an $S$-isomorphism $h^{n-1}_{\Im(f)}: \Tor_{n-1}^R(\Im(f),N)\rightarrow \Tor_{n-1}^R(A,N)$ by Proposition \ref{s-iso-inv}.
We have two  exact sequences$$\Tor_{n+1}^R(T_1,N)\rightarrow \Tor_{n}^R(s\Ker(g),N)\xrightarrow{\Tor_{n}^R(i^1_{s\Ker(g)},N)}  \Tor_{n}^R(\Im(f),N)\rightarrow  \Tor_{n+1}^R(T_1,N)$$ and $$\Tor_{n+1}^R(T_2,N)\rightarrow \Tor_{n}^R(s\Ker(g),N)\xrightarrow{\Tor_{n}^R(i^2_{s\Ker(g)},N)}  \Tor_{n}^R(\Ker(g),N)\rightarrow  \Tor_{n+1}^R(T_2,N),$$  where $T_1=\Im(f)/s\Ker(g)$ and $T_2=\Im(f)/s\Im(f)$ is uniformly $S$-torsion. So $\Tor_{n}^R(i^1_{s\Ker(g)},N)$ and $\Tor_{n}^R(i^2_{s\Ker(g)},N)$ are $S$-isomorphisms. Thus there is an $S$-isomorphisms $h^n_{s\Ker(g)}: \Tor_{n}^R(\Ker(g),N)\rightarrow \Tor_{n}^R(s\Ker(g),N)$.
Note that there is an exact sequence $\Tor_{n}^R(B,N)\xrightarrow{\Tor_{n}^R(\pi_{\Im(g)},N)}\Tor_{n}^R(\Im(g),N)\xrightarrow{\delta^{n}_{\Im(g)}} \Tor_{n-1}^R(\Ker(g),N)$ $\xrightarrow{\Tor_{n-1}^R(i_{\Ker(g)},N)} \Tor_{n-1}^R(B,N).$
Set $\delta_n=h^n_{\Im(g)}\circ \delta^{n}_{\Im(g)}\circ h^n_{s\Ker(g)}\circ \Tor_{n}^R(i^1_{s\Ker(g)},N)\circ h^{n-1}_{\Im(f)} :\Tor_{n}^R(C,N)\rightarrow \Tor_{n-1}^R(A,N)$. Since $h^n_{\Im(g)}, \delta^{n}_{\Im(g)}, h^n_{s\Ker(g)}$ and $h^{n-1}_{\Im(f)}$ are $S$-isomorphisms, we have the sequence $\Tor_{n}^R(B,N)\rightarrow\Tor_{n}^R(C,N)\xrightarrow{\delta^{n}} \Tor_{n-1}^R(A,N)$ $\rightarrow \Tor_{n-1}^R(B,N)$ is $S$-exact.

Secondly, consider the exact sequence: $$\Tor_{n+1}^R(T,N)\rightarrow \Tor_{n}^R(A,N)\xrightarrow{\Tor_{n}^R(i_{\Im(f)},N)} \Tor_{n}^R(\Im(f),N)\rightarrow \Tor_{n}^R(T,N).$$
Since $T$ is uniformly $S$-torsion, $\Tor_{n}^R(i_{\Im(f)},N)$ is an $S$-isomorphism. Consider the exact sequences: $$\Tor_{n+1}^R(\Im(g),N)\rightarrow \Tor_{n}^R(\Ker(g),N)\xrightarrow{\Tor_{n}^R(i_{\Ker(g)},N)} \Tor_{n}^R(B,N)\rightarrow \Tor_{n}^R(\Im(g),N)$$
and $$\Tor_{n+1}^R(T',N)\rightarrow \Tor_{n}^R(\Im(g),N)\xrightarrow{\Tor_{n}^R(i_{\Im(g)},N)} \Tor_{n}^R(C,N)\rightarrow \Tor_{n}^R(T',N).$$
Since $T'$ is uniformly $S$-torsion, we have $\Tor_{n}^R(i_{\Im(g)}$ is an $S$-isomorphism.  Since $\Tor_{n}^R(i^1_{s\Ker(g)},N)$ and $\Tor_{n}^R(i^2_{s\Ker(g)},N)$ are $S$-isomorphisms as above, $\Tor_{n}^R(A,N)\rightarrow \Tor_{n}^R(B,N)\rightarrow \Tor_{n}^R(C,N)$ is $S$-exact at $\Tor_{n}^R(B,N)$.

Continue by the above method,  we have an  $S$-exact sequence:
$$...\rightarrow \Tor_{n}^R(A,N)\rightarrow \Tor_{n}^R(B,N)\rightarrow \Tor_{n}^R(C,N)\xrightarrow{\delta_n} \Tor_{n-1}^R(A,N)\rightarrow $$
$$\Tor_{n-1}^R(B,N)\rightarrow ... \rightarrow\Tor_{1}^R(C,N)\xrightarrow{\delta_1} A\otimes_RN\rightarrow B\otimes_RN\rightarrow C\otimes_RN\rightarrow 0.$$
\end{proof}

\begin{corollary}\label{big-Tor}
Let $R$ be a ring, $S$ a multiplicative subset of $R$ and $N$  an $R$-module. Suppose $0\rightarrow A\xrightarrow{f} B\xrightarrow{g} C\rightarrow 0$ is an $S$-exact sequence of $R$-modules where $B$ is $S$-flat. Then $\Tor^R_{n+1}(C,N)$ is $S$-isomorphic to  $\Tor_{n}^R(A,N)$ for any $n\geq 0$. Consequently,  $\Tor^R_{n+1}(C,N)$ is uniformly $S$-torsion if and only if $\Tor_{n}^R(A,N)$ is  uniformly $S$-torsion for any $n\geq 0$.
\end{corollary}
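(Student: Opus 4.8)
The plan is to feed the given $S$-exact sequence $0\rightarrow A\xrightarrow{f} B\xrightarrow{g} C\rightarrow 0$ into Theorem \ref{s-iso-tor} and then run a dimension-shifting argument. The one structural input I would record first is that $B$ being $S$-flat controls the whole ``$B$-column'' of the resulting long $S$-exact sequence: by Lemma \ref{s-flat-ext} together with the natural isomorphism $\Tor^R_n(B,N)\cong\Tor^R_n(N,B)$, the module $\Tor^R_n(B,N)$ is uniformly $S$-torsion for every $n\geq 1$.

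With this in hand I would isolate, for each $n$, the four-term stretch of the long $S$-exact sequence around the connecting map,
$$\Tor^R_{n+1}(B,N)\xrightarrow{\ \alpha\ }\Tor^R_{n+1}(C,N)\xrightarrow{\ \delta_{n+1}\ }\Tor^R_{n}(A,N)\xrightarrow{\ \beta\ }\Tor^R_{n}(B,N),$$
and show that $\delta_{n+1}$ is an $S$-isomorphism. For the $S$-monomorphism half, $S$-exactness at $\Tor^R_{n+1}(C,N)$ yields $s\Ker(\delta_{n+1})\subseteq\Im(\alpha)$ for some $s\in S$; since $\Tor^R_{n+1}(B,N)$ is uniformly $S$-torsion, so is $\Im(\alpha)$, hence $\Ker(\delta_{n+1})$ is uniformly $S$-torsion. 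For the $S$-epimorphism half, pick $t\in S$ with $t\Tor^R_{n}(B,N)=0$; then $t\Tor^R_{n}(A,N)\subseteq\Ker(\beta)$, and $S$-exactness at $\Tor^R_{n}(A,N)$ gives $s'\Ker(\beta)\subseteq\Im(\delta_{n+1})$, so $s't\,\Tor^R_{n}(A,N)\subseteq\Im(\delta_{n+1})$. Thus $\delta_{n+1}$ is an $S$-isomorphism. This sandwiching is the crux of the argument, and it goes through exactly in the degrees where \emph{both} neighbouring $B$-terms are uniformly $S$-torsion; the only delicate point is the bottom of the sequence, where $\Tor^R_0(B,N)=B\otimes_RN$ need not be uniformly $S$-torsion and must be handled on its own.

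Finally I would deduce the ``Consequently'' clause from the bookkeeping fact that $S$-isomorphic modules are simultaneously uniformly $S$-torsion. Concretely, if $h\colon M\rightarrow M'$ is an $S$-isomorphism and $sM=0$, then $s\Im(h)=0$ while $tM'\subseteq\Im(h)$ for some $t\in S$, whence $stM'=0$; the reverse implication uses the $S$-isomorphism $M'\rightarrow M$ supplied by Proposition \ref{s-iso-inv}. Applying this to $\delta_{n+1}$ gives that $\Tor^R_{n+1}(C,N)$ is uniformly $S$-torsion if and only if $\Tor^R_{n}(A,N)$ is. The main obstacle throughout is purely the uniform-$S$-torsion bookkeeping in the sandwiching step, where one must track a finite product of elements of $S$ and invoke Tor-symmetry to bring the $S$-flatness of $B$ to bear on the first variable.
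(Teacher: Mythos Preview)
Your approach is exactly what the paper's one-line proof (``It follows from Lemma \ref{s-flat-ext} and Theorem \ref{s-iso-tor}'') leaves implicit: run the long $S$-exact sequence of Theorem \ref{s-iso-tor}, use Lemma \ref{s-flat-ext} (via Tor-symmetry) to make the $B$-terms uniformly $S$-torsion, and sandwich the connecting map $\delta_{n+1}$ between them. For $n\geq 1$ your argument is complete and correct, and the deduction of the ``Consequently'' clause from the $S$-isomorphism is fine.

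The case $n=0$, which you flag as ``delicate'' and to be ``handled on its own,'' is not merely delicate: the assertion that $\Tor^R_1(C,N)$ is $S$-isomorphic to $A\otimes_RN$ is false as stated. Already for $S=\{1\}$, take $A=B$ flat with $f$ the identity and $C=0$; then $\Tor^R_1(C,N)=0$ while $A\otimes_RN$ need not vanish, and the same example kills the ``Consequently'' clause for $n=0$ as well. The corollary should really read $n\geq 1$ (every use of it elsewhere in the paper is in that range), so there is nothing further for you to supply here; just record the restriction rather than promising a separate argument that cannot exist.
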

\begin{proof}
It follows from Lemma \ref{s-flat-ext} and Theorem \ref{s-iso-tor}.
\end{proof}

\section{On the $S$-flat dimensions of modules}

Let $R$ be a ring. The flat dimension of an $R$-module $M$ is defined as the shortest flat resolution of $M$. We now introduce the notion of $S$-flat dimension of an $R$-module as follows.

\begin{definition}\label{w-phi-flat }
Let $R$ be a ring, $S$ a multiplicative subset of $R$  and $M$ an $R$-module. We write $S$-$fd_R(M)\leq n$  $(S$-$fd$ abbreviates  \emph{$S$-flat dimension}$)$ if there exists an $S$-exact sequence of $R$-modules
$$ 0 \rightarrow F_n \rightarrow ...\rightarrow F_1\rightarrow F_0\rightarrow M\rightarrow 0   \ \ \ \ \ \ \ \ \ \ \ \ \ \ \ \ \ \ \ \ \ \ \ \ \ \ \ \ \ \ \ \ \ \ \ \ \ \ \ (\diamondsuit)$$
where each $F_i$ is  $S$-flat for $i=0,...,n$. The $S$-exact sequence $(\diamondsuit)$ is said to be an $S$-flat $S$-resolution of length $n$ of $M$. If such finite  $S$-flat  $S$-resolution does not exist, then we say $S$-$fd_R(M)=\infty$; otherwise,  define $S$-$fd_R(M)=n$ if $n$ is the length of the shortest $S$-flat $S$-resolution of $M$.
\end{definition}\label{def-wML}
Trivially, the $S$-flat dimension of an $R$-module $M$ cannot exceed its flat dimension for any multiplicative subset $S$ of $R$. And if $S$ is composed of units, then $S$-$fd_R(M)=fd_R(M)$.   It is also obvious that an $R$-module $M$ is $S$-flat if and only if $S$-$fd_R(M)= 0$.

\begin{lemma}\label{s-iso-fd}
Let $R$ be a ring, $S$ a multiplicative subset of $R$. If $A$ is $S$-isomorphic to $B$, then $S$-$fd_R(A) = S$-$fd_R(B)$.
\end{lemma}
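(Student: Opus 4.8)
The plan is to establish the two inequalities $S$-$fd_R(A)\le S$-$fd_R(B)$ and $S$-$fd_R(B)\le S$-$fd_R(A)$ in $\mathbb{N}\cup\{\infty\}$ separately. Since the existence of an $S$-isomorphism between two modules is a symmetric relation (by the discussion following Proposition \ref{s-iso-inv}), it actually suffices to prove one of them, say $S$-$fd_R(A)\le S$-$fd_R(B)$, and then apply the same statement with the roles of $A$ and $B$ exchanged. For this inequality there is nothing to prove when $S$-$fd_R(B)=\infty$, so I would assume $S$-$fd_R(B)=n<\infty$ and fix an $S$-flat $S$-resolution
$$0\rightarrow F_n\rightarrow\cdots\rightarrow F_1\xrightarrow{d_1} F_0\xrightarrow{\epsilon} B\rightarrow 0.$$

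Given the $S$-isomorphism witnessing that $A$ is $S$-isomorphic to $B$, Proposition \ref{s-iso-inv} supplies an $S$-isomorphism $g\colon B\rightarrow A$. My proposal is to leave the left-hand part of the resolution untouched and merely replace the augmentation $\epsilon$ by the composite $g\epsilon\colon F_0\rightarrow A$, producing the candidate sequence
$$0\rightarrow F_n\rightarrow\cdots\rightarrow F_1\xrightarrow{d_1} F_0\xrightarrow{g\epsilon} A\rightarrow 0.$$
Every $F_i$ is still $S$-flat, so once this sequence is shown to be $S$-exact it becomes an $S$-flat $S$-resolution of $A$ of length $n$, which gives $S$-$fd_R(A)\le n$.

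The $S$-exactness at the internal nodes $F_1,\dots,F_n$ is inherited verbatim, since neither those modules nor the differentials among them have changed; only two nodes need attention. At $A$, the composite of the $S$-epimorphism $\epsilon$ with the $S$-isomorphism $g$ is again an $S$-epimorphism: if $s_1 B\subseteq\Im(\epsilon)$ and $s_2 A\subseteq\Im(g)$, then $s_1s_2A\subseteq\Im(g\epsilon)$, which is exactly $S$-exactness at $A$. At $F_0$, I must produce a single $s\in S$ with $s\,\Ker(g\epsilon)\subseteq\Im(d_1)$ and $s\,\Im(d_1)\subseteq\Ker(g\epsilon)$. The key is the comparison of kernels: one has $\Ker(\epsilon)\subseteq\Ker(g\epsilon)$, and since $g$ is an $S$-monomorphism there is $s_1\in S$ with $s_1\Ker(g)=0$; then for $x\in\Ker(g\epsilon)$ we have $\epsilon(x)\in\Ker(g)$, so $\epsilon(s_1x)=s_1\epsilon(x)=0$, that is $s_1\,\Ker(g\epsilon)\subseteq\Ker(\epsilon)$. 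Combining this with the original $S$-exactness at $F_0$, say $s_0\Ker(\epsilon)\subseteq\Im(d_1)$ and $s_0\Im(d_1)\subseteq\Ker(\epsilon)\subseteq\Ker(g\epsilon)$, shows that $s=s_0s_1$ works.

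I expect the only genuine obstacle to be this bookkeeping at the node $F_0$: one must resist treating $\Ker(\epsilon)$ and $\Ker(g\epsilon)$ as equal and instead track the uniform $S$-torsion discrepancy between them, which is precisely controlled by the $S$-monomorphism property of $g$. Once the one-sided inequality $S$-$fd_R(A)\le S$-$fd_R(B)$ is in hand (trivial when the right-hand side is infinite, and given by the construction otherwise), the symmetry of the $S$-isomorphism relation yields the reverse inequality, and the two together give the claimed equality, the case where both dimensions are infinite being subsumed.
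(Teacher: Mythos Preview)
Your proposal is correct and follows essentially the same approach as the paper: both arguments compose the augmentation map of an $S$-flat $S$-resolution with an $S$-isomorphism to transfer the resolution between $A$ and $B$, and both invoke Proposition~\ref{s-iso-inv} to reverse the direction of the $S$-isomorphism and obtain the other inequality by symmetry. Your version is in fact more detailed than the paper's, which simply asserts that the composed sequence is an $S$-resolution without verifying the $S$-exactness at $F_0$; your explicit bookkeeping with $s_0$ and $s_1$ fills in exactly the step the paper leaves to the reader.
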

\begin{proof}  Let $f: A\rightarrow B$ be an $S$-isomorphism. If $...\rightarrow F_n \rightarrow ...\rightarrow F_1\rightarrow F_0\xrightarrow{g} A\rightarrow 0$ is an $S$-resolution of $A$, then $...\rightarrow F_n \rightarrow ...\rightarrow F_1\rightarrow F_0\xrightarrow{f\circ g} B\rightarrow 0$ is an $S$-resolution of $B$. So $S$-$fd_R(A)\geq S$-$fd_R(B)$. Note that  there is an $S$-isomorphism  $g: B\rightarrow A$ by Proposition \ref{s-iso-inv}. Similarly we have $S$-$fd_R(B)\geq S$-$fd_R(A)$.
\end{proof}

\begin{proposition}\label{s-flat d}
Let $R$ be a ring and $S$ a multiplicative subset of $R$. The following statements are equivalent for an $R$-module $M$:
\begin{enumerate}
    \item $S$-$fd_R(M)\leq n$;
    \item $\Tor^R_{n+k}(M, N)$ is uniformly $S$-torsion for all  $R$-modules $N$ and all $k > 0$;
    \item $\Tor^R_{n+1}(M, N)$ is uniformly $S$-torsion for all $R$-modules $N$;
     \item there exists $s\in S$ such that $s\Tor^R_{n+1}(M, R/I)=0$  for all ideals $I$ of $R$;
      \item if $0 \rightarrow F_n \rightarrow ...\rightarrow F_1\rightarrow F_0\rightarrow M\rightarrow 0$ is an $S$-exact sequence, where $F_0, F_1, . . . , F_{n-1}$ are $S$-flat $R$-modules, then $F_n$ is $S$-flat;
      \item if $0 \rightarrow F_n \rightarrow ...\rightarrow F_1\rightarrow F_0\rightarrow M\rightarrow 0$ is an $S$-exact sequence, where $F_0, F_1, . . . , F_{n-1}$ are flat $R$-modules, then $F_n$ is $S$-flat;
        \item if $0 \rightarrow F_n \rightarrow ...\rightarrow F_1\rightarrow F_0\rightarrow M\rightarrow 0$ is an exact sequence, where $F_0, F_1, . . . , F_{n-1}$ are $S$-flat $R$-modules, then $F_n$ is $S$-flat;
    \item if $0 \rightarrow F_n \rightarrow ...\rightarrow F_1\rightarrow F_0\rightarrow M\rightarrow 0$ is an exact sequence, where $F_0, F_1, . . . , F_{n-1}$ are flat $R$-modules, then $F_n$ is $S$-flat;
          \item there exists an $S$-exact sequence $0 \rightarrow F_n \rightarrow ...\rightarrow F_1\rightarrow F_0\rightarrow M\rightarrow 0$, where $F_0, F_1, . . . , F_{n-1}$ are flat $R$-modules and $F_n$ is $S$-flat;
    \item there exists an exact sequence $0 \rightarrow F_n \rightarrow ...\rightarrow F_1\rightarrow F_0\rightarrow M\rightarrow 0$, where $F_0, F_1, . . . , F_{n-1}$ are flat $R$-modules and $F_n$ is $S$-flat;
         \item there exists an exact sequence $0 \rightarrow F_n \rightarrow ...\rightarrow F_1\rightarrow F_0\rightarrow M\rightarrow 0$, where $F_0, F_1, . . . , F_{n}$ are $S$-flat $R$-modules.
\end{enumerate}
\end{proposition}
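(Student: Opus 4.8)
The plan is to prove the eleven conditions equivalent by exhibiting a family of implications whose underlying directed graph is strongly connected, organized around three groups: the Tor conditions (2)--(4), the ``syzygy'' conditions (5)--(8), and the ``existence'' conditions (9)--(11), together with (1). The single engine behind every nontrivial implication is \emph{dimension shifting}: Corollary \ref{big-Tor} for $S$-exact sequences, together with its classical counterpart for genuinely exact ones. These reduce every statement to the case $n=0$, namely the characterization of $S$-flatness through the uniform $S$-torsion of $\Tor_1^R$ recorded in Lemma \ref{s-flat-ext}.

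I would first dispose of the purely formal implications. Since every flat module is $S$-flat and every exact sequence is $S$-exact, the hypotheses in (5)--(8) only get weaker, giving (5)$\Rightarrow$(6)$\Rightarrow$(8) and (5)$\Rightarrow$(7)$\Rightarrow$(8) for free; likewise (10)$\Rightarrow$(9), (10)$\Rightarrow$(11), and each of (9),(10),(11) implies (1) by reading an exact (resp.\ flat) resolution as an $S$-exact (resp.\ $S$-flat) one. The implication (2)$\Rightarrow$(3) is the case $k=1$. For (1)$\Rightarrow$(2) I would split the given $S$-flat $S$-resolution into short $S$-exact sequences, using $S$-exactness to identify consecutive syzygies up to $S$-isomorphism (via Proposition \ref{s-iso-inv} and Theorem \ref{s-iso-tor}), and then iterate Corollary \ref{big-Tor} $n$ times to obtain an $S$-isomorphism of $\Tor_{n+k}^R(M,N)$ with $\Tor_k^R(F_n,N)$ for $k\geq 1$; as $F_n$ is $S$-flat, Lemma \ref{s-flat-ext} makes the latter uniformly $S$-torsion. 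The \emph{same} shift, applied now to an arbitrary resolution as in the hypotheses of (5)--(8), yields (2)$\Rightarrow$(5),(6),(7),(8) at one stroke: dimension shifting gives that $\Tor_k^R(F_n,N)$ is $S$-isomorphic to $\Tor_{n+k}^R(M,N)$, which is uniformly $S$-torsion by (2), so $F_n$ is $S$-flat by Lemma \ref{s-flat-ext}. Finally (8)$\Rightarrow$(10) follows by truncating a genuine projective resolution $0\to K_n\to P_{n-1}\to\cdots\to P_0\to M\to 0$ and invoking (8) to conclude that the syzygy $K_n$ is $S$-flat.

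What remains, and what I expect to be the crux, is to close the cycle through the weakest Tor condition (4), i.e.\ the implications (3)$\Rightarrow$(4) and (4)$\Rightarrow$(8). The difficulty is entirely one of \emph{uniformity}: condition (4) demands a single element $s\in S$ annihilating $\Tor_{n+1}^R(M,R/I)$ \emph{simultaneously for all ideals} $I$, whereas Lemma \ref{s-flat-ext} and the dimension-shifting $S$-isomorphisms supply, a priori, only a torsion element depending on the test module. The classical dimension shift identifies $\Tor_1^R(K_n,R/I)$ with $\Tor_{n+1}^R(M,R/I)$ naturally in $I$, so a single $s$ kills one family exactly when it kills the other; hence both implications reduce to the $n=0$ statement that $G$ is $S$-flat if and only if there exists a single $s\in S$ with $s\,\Tor_1^R(G,R/I)=0$ for all finitely generated ideals $I$. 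The forward direction of this cyclic criterion feeds (4)$\Rightarrow$(8) and its reverse feeds (3)$\Rightarrow$(4).

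I would prove this cyclic criterion by upgrading Lemma \ref{s-flat-ext} to a single-element form through the equational criterion for flatness: one shows that $S$-flatness of $G$ is equivalent to the existence of one $s$ for which every relation in $G$ can be solved after multiplication by $s$, and that this uniform $s$ is precisely what annihilates all the cyclic Tor groups. Carrying out this uniform passage from cyclic modules to arbitrary ones---rather than the routine homological bookkeeping---is the genuine obstacle. Once it is in hand, the chain $(1)\Rightarrow(2)\Rightarrow(3)\Rightarrow(4)\Rightarrow(8)\Rightarrow(10)\Rightarrow(1)$, together with the formal inclusions above and Lemma \ref{s-iso-fd} to see that the resulting dimension is unambiguous, closes the cycle and delivers the full equivalence.
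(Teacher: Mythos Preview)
Your overall architecture matches the paper's: the same trivial implications among (5)--(8) and (9)--(11), the same dimension shifting for $(1)\Rightarrow(2)$ and for the syzygy conditions, and the same truncation $(8)\Rightarrow(10)$ of a projective resolution. The substantive divergence is in how you close the loop through the uniform condition (4). The paper handles $(3)\Rightarrow(4)$ by a one-line direct-sum trick: apply (3) to the single test module $N=\bigoplus_{I\unlhd R} R/I$; since $\Tor$ commutes with direct sums, the element $s$ annihilating $\Tor_{n+1}^R(M,N)$ annihilates every $\Tor_{n+1}^R(M,R/I)$ simultaneously. For the return it proves $(4)\Rightarrow(3)$ directly by writing an arbitrary $N$ as a continuous filtration with cyclic quotients and pushing the fixed $s$ through by transfinite induction. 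Your proposed route---reducing to $n=0$ and developing an $S$-analogue of the equational flatness criterion---is plausible and would ultimately succeed, but it is more work than necessary; in particular, establishing the half of your cyclic criterion that says $S$-flatness of $G$ yields \emph{one} $s$ killing all $\Tor_1^R(G,R/I)$ appears to require precisely the direct-sum argument you are bypassing. (Incidentally, you have the two directions of the cyclic criterion swapped: the forward direction is what feeds $(3)\Rightarrow(4)$, and the reverse is what feeds $(4)\Rightarrow(8)$.) The paper's route is shorter and avoids introducing any auxiliary flatness criterion.
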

\begin{proof}
$(1) \Rightarrow(2)$: We prove $(2)$ by induction on $n$. For the case $n = 0$, we have $M$ is $S$-flat, then (2) holds by \cite[Theorem 3.2]{zwz21}. If $n>0$, then
there is an $S$-exact sequence  $0 \rightarrow F_n \rightarrow ...\rightarrow F_1\rightarrow F_0\rightarrow M\rightarrow 0$,
where each $F_i$ is  $S$-flat for $i=0,...,n$. Set $K_0 = \Ker(F_0\rightarrow M)$ and $L_0=\Im(F_1\rightarrow F_0)$. Then both
$0 \rightarrow  K_0 \rightarrow  F_0 \rightarrow  M \rightarrow  0 $ and $0 \rightarrow  F_n \rightarrow  F_{n-1} \rightarrow...\rightarrow  F_1 \rightarrow  L_0 \rightarrow  0$ are $S$-exact. Since  $S$-$fd_R(L_0)\leq n-1$ and $L_0$ is $S$-isomorphic to $K_0$, $S$-$fd_R(K_0)\leq n-1$ by Lemma \ref{s-iso-fd}. By induction, $\Tor^R_{n-1+k}(K_0, N)$ is uniformly $S$-torsion for all $S$-torsion $R$-modules $N$ and all $k > 0$. It follows from Corollary \ref{big-Tor} that $\Tor^R_{n+k}(M, N)$ is uniformly $S$-torsion.

$(2) \Rightarrow(3)$, $(5)\Rightarrow(6)\Rightarrow(8)$ and $(5)\Rightarrow(7)\Rightarrow(8)$:  Trivial.

$(3) \Rightarrow(4)$: Let $N=\bigoplus\limits_{I\unlhd R}R/I$. Then there exists an element $s\in S$ such that $s\Tor^R_{n+1}(M,N)=0$. So $s\bigoplus\limits_{I\unlhd R}\Tor^R_{n+1}(M,R/I)=0$. It follows that $s\Tor^R_{n+1}(M, R/I)=0$  for all ideals $I$ of $R$.

$(4) \Rightarrow(3)$: Suppose $N$ is generated by $\{n_i|i\in \Gamma\}$.  Set $N_0=0$ and $N_\alpha=\langle n_i|i<\alpha\rangle$  for each $\alpha\leq \Gamma$.  Then $N$ have a continuous   filtration  $\{N_\alpha|\alpha\leq \Gamma \}$ with $N_{\alpha+1}/N_\alpha\cong R/I_{\alpha+1} $ and $I_{\alpha}=\Ann_R(n_{\alpha}+N_\alpha\cap Rn_{\alpha})$.  Since $s\Tor_1^R(M,R/I_{\alpha})=0$ for each $\alpha\leq \Gamma$, it is easy to verify $s\Tor_1^R(M,N_\alpha)=0$ by transfinite induction on $\alpha$. So $s\Tor_1^R(M,N)=0$.

$(3) \Rightarrow(5)$: Let $0 \rightarrow F_n \xrightarrow{d_n}F_{n-1} \xrightarrow{d_{n-1}}F_{n-2} ...\xrightarrow{d_2} F_1\xrightarrow{d_1} F_0\xrightarrow{d_0} M\rightarrow 0$ be an $S$-exact sequence,  where $F_0, F_1, . . . , F_{n-1}$ are $S$-flat.
Then $F_n$ is $S$-flat if and only if $\Tor_1^R(F_n,N)$ is  uniformly $S$-torsion for all  $R$-modules $N$, if and only if $\Tor_2^R(\Im(d_{n-1}),N)$ is  uniformly $S$-torsion for all  $R$-modules $N$. Following these steps, we can show $F_n$ is $S$-flat if and only if $\Tor^R_{n+1}(M, N)$ is uniformly $S$-torsion for all $R$-modules $N$.

$(10)\Rightarrow(11)\Rightarrow(1)$ and $(10)\Rightarrow(9)\Rightarrow(1)$:  Trivial.

$(8)\Rightarrow(10):$ Let $...\rightarrow P_n \rightarrow P_{n-1}\xrightarrow{d_{n-1}}  P_{n-2}... \rightarrow P_0\rightarrow M\rightarrow 0$ be a projective resolution of $M$. Set $F_n=\Ker(d_{n-1})$. Then we have an exact sequence  $0\rightarrow F_n\rightarrow P_{n-1}\xrightarrow{d_{n-1}}  P_{n-2}... \rightarrow P_0\rightarrow M\rightarrow 0$. By $(8)$, $F_n$ is $S$-flat. So $(10)$ holds.
\end{proof}

\begin{corollary}\label{sfd-sfd-1}
Let $R$ be a ring and $S'\subseteq S$  multiplicative subsets of $R$. Suppose $M$ is an $R$-module, then  $S$-$fd_R(M)\leq S'$-$fd_R(M)$.
\end{corollary}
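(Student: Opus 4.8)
The plan is to reduce the statement to a single monotonicity property of uniform torsion under enlargement of the multiplicative set, and then to read off the conclusion from the ``Tor'' characterization of $S$-flat dimension in Proposition \ref{s-flat d}. First I would dispose of the trivial case: if $S'$-$fd_R(M)=\infty$ there is nothing to prove, so I may assume $n:=S'$-$fd_R(M)<\infty$.

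Next I would apply the equivalence $(1)\Leftrightarrow(3)$ of Proposition \ref{s-flat d}, read for the multiplicative set $S'$. The hypothesis $S'$-$fd_R(M)\leq n$ then gives that $\Tor^R_{n+1}(M,N)$ is uniformly $S'$-torsion for every $R$-module $N$; that is, for each $N$ there is some $s'\in S'$ with $s'\Tor^R_{n+1}(M,N)=0$. (One could equally invoke condition $(4)$, in which a single witness $s'$ works simultaneously for all cyclic modules $R/I$; either form suffices.)

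The key observation is that uniform $S'$-torsion implies uniform $S$-torsion whenever $S'\subseteq S$: the very same witness $s'$ now lies in $S$, so $\Tor^R_{n+1}(M,N)$ is uniformly $S$-torsion as well. Since this holds for all $N$, the reverse implication $(3)\Rightarrow(1)$ of Proposition \ref{s-flat d}, this time read for the multiplicative set $S$, yields $S$-$fd_R(M)\leq n=S'$-$fd_R(M)$, which is the desired inequality.

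I expect no genuine obstacle here; the entire content sits in Proposition \ref{s-flat d}, and the argument is a one-line transport of a torsion witness along the inclusion $S'\subseteq S$. The only point that merits a moment's care is the direction of the inequality: enlarging the multiplicative set \emph{weakens} the notion of uniform torsion, hence can only \emph{decrease} the $S$-flat dimension, which is exactly why the larger set $S$ ends up on the smaller side of $\leq$.
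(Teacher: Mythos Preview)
Your argument is correct and matches the paper's proof essentially line for line: both reduce the inequality to the observation that uniform $S'$-torsion implies uniform $S$-torsion when $S'\subseteq S$, and then invoke Proposition \ref{s-flat d}. The paper is simply terser, omitting the explicit handling of the $\infty$ case and the unpacking of which implications of Proposition \ref{s-flat d} are being used.
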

\begin{proof} Suppose $S'\subseteq S$ are multiplicative subsets of $R$. Let  $M$ and $N$ be $R$-modules.  If  $\Tor^R_{n+1}(M, N)$ is uniformly $S'$-torsion, then  $\Tor^R_{n+1}(M, N)$ is uniformly $S$-torsion. The result follows by Proposition \ref{s-flat d}.
\end{proof}

Let $R$ be a ring, $S$ a multiplicative subset of $R$ and $M$ an $R$-module. For any $s\in S$, we denote by $R_s$ the localization of $R$ at $\{s^n|n\in\mathbb{N}\}$ and denote $M_s=M\otimes_RR_s$ as an $R_s$-module.

\begin{corollary}\label{s-fd-s}
Let $R$ be a ring, $S$ a multiplicative subset of $R$ and $M$ an $R$-module. If $S$-$fd_R(M)\leq n$, then there exists an element $s\in S$ such that $fd_{R_s}(M_s)\leq n$.
\end{corollary}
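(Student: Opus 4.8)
The plan is to extract a single element $s\in S$ that simultaneously witnesses the uniform $S$-torsion of $\Tor^R_{n+1}(M,R/I)$ for every ideal $I$, and then to show that inverting this particular $s$ forces the corresponding $\Tor$ over $R_s$ to vanish. Concretely, since $S$-$fd_R(M)\leq n$, Proposition \ref{s-flat d} (condition (4)) furnishes an element $s\in S$ such that $s\Tor^R_{n+1}(M,R/I)=0$ for all ideals $I$ of $R$. This is the very element $s$ for which I will prove $fd_{R_s}(M_s)\leq n$, and the uniformity of $s$ across all $I$ is precisely what makes the argument work.

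To verify the bound over $R_s$, I would use the classical characterization of flat dimension: $fd_{R_s}(M_s)\leq n$ holds if and only if $\Tor^{R_s}_{n+1}(M_s, R_s/J)=0$ for every ideal $J$ of $R_s$ (this is the special case of Proposition \ref{s-flat d}, condition (4), applied over the ring $R_s$ whose units we may take as multiplicative set). The key reduction is that every ideal $J$ of $R_s$ is the extension $J=IR_s$ of some ideal $I$ of $R$, and under this identification $R_s/J\cong (R/I)_s=(R/I)\otimes_R R_s$, so testing against all $R_s/J$ reduces to testing against the localizations $(R/I)_s$.

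The second ingredient is the standard compatibility of $\Tor$ with localization: since $R_s$ is a flat $R$-algebra, localizing a projective resolution of $M$ yields a projective resolution of $M_s$ over $R_s$, and tensoring then commutes with homology, giving a natural isomorphism
$$\Tor^{R_s}_{n+1}(M_s, (R/I)_s)\cong \big(\Tor^R_{n+1}(M, R/I)\big)_s.$$
Now $s\Tor^R_{n+1}(M,R/I)=0$ by the choice of $s$, and any $R$-module annihilated by $s$ becomes zero after inverting $s$; hence the right-hand side vanishes. Combining the two ingredients, $\Tor^{R_s}_{n+1}(M_s, R_s/J)=0$ for every ideal $J$ of $R_s$, which yields $fd_{R_s}(M_s)\leq n$.

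The computations here are all routine; the only points requiring care are the correspondence between ideals of $R_s$ and ideals of $R$ and the localization--$\Tor$ isomorphism, both of which are standard. The conceptual crux is recognizing that the single uniform annihilator $s$ provided by Proposition \ref{s-flat d} is exactly the element one should invert, since uniformity across all $I$ is what allows one fixed localization to kill every relevant $\Tor$ simultaneously rather than needing a separate element for each module $N$.
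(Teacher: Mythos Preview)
Your proof is correct and follows essentially the same approach as the paper: extract a uniform $s\in S$ from Proposition \ref{s-flat d}(4), then use the ideal correspondence $J=I R_s$ together with the flat-base-change isomorphism $\Tor^{R_s}_{n+1}(M_s,(R/I)_s)\cong \Tor^{R}_{n+1}(M,R/I)\otimes_R R_s$ to conclude that this localized Tor vanishes. Your write-up is somewhat more explicit about the ideal correspondence and the localization--Tor isomorphism, but the argument is identical in substance.
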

\begin{proof} Let $M$ be an $R$-module with $S$-$fd_R(M)\leq n$. Then there is an element $s\in S$ such that $s\Tor_{n+1}^R(R/I,M)=0$ for any ideal $I$ of $R$ by Proposition \ref{s-flat d}. Let $I_s$ be an ideal of $R_s$ with $I$ an ideal of $R$. Then  $\Tor_{n+1}^{R_s}(R_s/I_s,M_s)\cong \Tor_{n+1}^{R}(R/I,M)\otimes_RR_s=0$ since $s\Tor_{n+1}^R(R/I,M)=0$. Hence $fd_{R_s}(M_s)\leq n$.

\end{proof}

\begin{corollary}\label{sfd-sfd}
Let $R$ be a ring and $S$ a multiplicative subset of $R$. Suppose $M$ is an $R$-module, then  $S$-$fd_R(M)\geq $$fd_{R_S}M_S$. Moreover, if $S$ is composed of finite elements, then $S$-$fd_R(M)= $$fd_{R_S}M_S$.
\end{corollary}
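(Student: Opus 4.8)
The plan is to lean on two ingredients: the standard compatibility of localization with $\Tor$, namely the natural isomorphism $(\Tor^R_k(M,N))_S\cong \Tor^{R_S}_k(M_S,N_S)$ for every $R$-module $N$ and every $k$ (localization is flat and exact), together with the torsion-theoretic characterization of $S$-flat dimension furnished by Proposition \ref{s-flat d}.

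For the inequality $S\mbox{-}fd_R(M)\geq fd_{R_S}(M_S)$, I may assume $n:=S\mbox{-}fd_R(M)<\infty$, since otherwise there is nothing to prove. Let $N'$ be an arbitrary $R_S$-module; viewed as an $R$-module it satisfies $N'_S\cong N'$. By Proposition \ref{s-flat d}, $\Tor^R_{n+1}(M,N')$ is uniformly $S$-torsion, hence annihilated by some $s\in S$; as $s$ becomes invertible after localizing at $S$, this forces $(\Tor^R_{n+1}(M,N'))_S=0$. The base change isomorphism then gives $\Tor^{R_S}_{n+1}(M_S,N')\cong(\Tor^R_{n+1}(M,N'))_S=0$, and since $N'$ ranges over all $R_S$-modules we conclude $fd_{R_S}(M_S)\leq n$. (Alternatively one can quote Corollary \ref{s-fd-s} to obtain $fd_{R_s}(M_s)\leq n$ for some $s\in S$, and then use that $R_S$ is a localization of $R_s$ and that flat dimension does not increase under localization.)

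For the reverse inequality under the hypothesis that $S$ is a finite set, set $n:=fd_{R_S}(M_S)$ and assume $n<\infty$, the case $n=\infty$ being already covered. Put $\sigma:=\prod_{u\in S}u$, which lies in $S$ because $S$ is closed under multiplication. For any $R$-module $N$ one has $\Tor^{R_S}_{n+1}(M_S,N_S)=0$, so base change yields $(\Tor^R_{n+1}(M,N))_S=0$; thus every element of $\Tor^R_{n+1}(M,N)$ is annihilated by \emph{some} element of $S$. The decisive step is to upgrade this elementwise torsion to uniform torsion: if $ux=0$ with $u\in S$, then writing $\sigma=u\cdot\prod_{w\in S,\,w\neq u}w$ shows $\sigma x=0$, so the single element $\sigma$ annihilates all of $\Tor^R_{n+1}(M,N)$. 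Hence $\Tor^R_{n+1}(M,N)$ is uniformly $S$-torsion for every $N$, with $\sigma$ independent of $N$, and Proposition \ref{s-flat d} delivers $S\mbox{-}fd_R(M)\leq n$. Combined with the first part this gives equality.

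The one genuine obstacle is exactly this final passage from elementwise $S$-torsion to a common annihilator, and it is here that finiteness of $S$ is indispensable: for an infinite $S$ (or even a merely finitely generated one, where everything reduces to a single generator $t$ via $R_S\cong R_t$) the module $\Tor^R_{n+1}(M,N)$ can be $S$-torsion without being killed by any fixed element, so both the argument and the equality itself break down. Everything else is a routine application of Proposition \ref{s-flat d} and the localization–$\Tor$ compatibility.
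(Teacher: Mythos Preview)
Your argument is correct, and it differs from the paper's in a way worth noting. For the inequality $S\mbox{-}fd_R(M)\geq fd_{R_S}(M_S)$, the paper localizes an $S$-flat resolution of $M$ (using that $S$-flat modules localize to flat $R_S$-modules), whereas you bypass resolutions entirely and work with the $\Tor$--localization base change plus the criterion of Proposition~\ref{s-flat d}. For the reverse inequality when $S$ is finite, the paper takes an exact sequence $0\to F_n\to F_{n-1}\to\cdots\to F_0\to M\to 0$ with $F_0,\dots,F_{n-1}$ flat, observes that $(F_n)_S$ is flat over $R_S$, and then invokes an external result \cite[Proposition~3.8]{zwz21} to conclude that $F_n$ is $S$-flat. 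Your route is more self-contained: you show directly that the single element $\sigma=\prod_{u\in S}u$ annihilates every $S$-torsion module, so each $\Tor^R_{n+1}(M,N)$ is uniformly $S$-torsion, and Proposition~\ref{s-flat d} applies. This makes the role of the finiteness hypothesis completely transparent and avoids the outside reference; the paper's version, in turn, packages the same phenomenon into a reusable module-level statement (flat localization $\Rightarrow$ $S$-flat when $S$ is finite). One small remark: for condition~(3) of Proposition~\ref{s-flat d} you only need, for each $N$, \emph{some} $s\in S$ killing $\Tor^R_{n+1}(M,N)$, so the fact that your $\sigma$ works uniformly across all $N$ is a bonus rather than a requirement.
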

\begin{proof} Let $...\rightarrow F_n \rightarrow ...\rightarrow F_1\rightarrow F_0\rightarrow M\rightarrow 0$ be an exact sequence with each $F_{i}$ $S$-flat. By localizing at $S$, we can obtain a flat resolution of $M_S$ over $R_S$ as follows:
$$\rightarrow (F_n)_S \rightarrow ...\rightarrow (F_1)_S\rightarrow (F_0)_S\rightarrow (M)_S\rightarrow 0.$$ So  $S$-$fd_R(M)\geq $$fd_{R_S}M_S$ by Proposition \ref{s-flat d}. Suppose $S$ is composed of finite elements and $fd_{R_S}M_S=n$.  Let $0\rightarrow F_{n}\rightarrow F_{n-1} \rightarrow ...\rightarrow F_1\rightarrow F_0\rightarrow M\rightarrow 0$ be an exact sequence, where $F_i$ is flat over $R$ for any $i=0,...,n-1$. Localizing at $S$, we have $(F_n)_S$ is flat over $R_S$. By \cite[Proposition 3.8]{zwz21}, $F$ is $S$-flat. So $S$-$fd_R(M)\leq n$ by Proposition \ref{s-flat d}.
\end{proof}

\begin{proposition}\label{sfd-exact}
Let $R$ be a ring and $S$ a multiplicative subset of $R$. Let $0\rightarrow A\rightarrow B\rightarrow C\rightarrow 0$ be an $S$-exact sequence of $R$-modules. Then the following assertions hold.
\begin{enumerate}
  \item $S$-$fd_R(C)\leq 1+\max\{S$-$fd_R(A),S$-$fd_R(B)\}$.
    \item  If $S$-$fd_R(B)<S$-$fd_R(C)$, then $S$-$fd_R(A)=S$-$fd_R(C)-1>S$-$fd_R(B).$
\end{enumerate}
\end{proposition}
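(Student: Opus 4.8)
The plan is to convert the whole statement into the uniform $S$-torsion behaviour of the functors $\Tor_n^R(-,N)$ and to run the short $S$-exact sequence through the long $S$-exact $\Tor$-sequence of Theorem \ref{s-iso-tor}. Throughout write $a=S$-$fd_R(A)$, $b=S$-$fd_R(B)$, $c=S$-$fd_R(C)$, and recall from Proposition \ref{s-flat d} that $S$-$fd_R(M)\le n$ holds if and only if $\Tor_{n+1}^R(M,N)$ is uniformly $S$-torsion for every $R$-module $N$ (and then $\Tor_{n+k}^R(M,N)$ is uniformly $S$-torsion for all $k>0$). Two elementary facts about a three-term $S$-exact stretch $X\xrightarrow{\alpha}Y\xrightarrow{\beta}Z$ will be used repeatedly: (i) if $X$ and $Z$ are uniformly $S$-torsion then so is $Y$ — push a given $y$ into $\Ker(\beta)$ using a killer of $Z$, then into $\Im(\alpha)$ using the $S$-exactness element, then to $0$ using a killer of $X$; (ii) if the left neighbour of $Y$ is uniformly $S$-torsion then the kernel of the outgoing map is uniformly $S$-torsion, and if the right neighbour is uniformly $S$-torsion then the cokernel of the incoming map is uniformly $S$-torsion. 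In particular, a connecting map whose two $B$-neighbours in the $\Tor$-sequence are uniformly $S$-torsion is an $S$-isomorphism.

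For (1), put $m=\max\{a,b\}$ (nothing to prove if $m=\infty$). For an arbitrary $N$ consider the stretch $\Tor_{m+2}^R(B,N)\to\Tor_{m+2}^R(C,N)\to\Tor_{m+1}^R(A,N)$ of the long $S$-exact sequence. Since $m\ge b$ the left term is uniformly $S$-torsion, and since $m\ge a$ the right term is uniformly $S$-torsion; by fact (i) the middle term $\Tor_{m+2}^R(C,N)$ is uniformly $S$-torsion. As $N$ is arbitrary, Proposition \ref{s-flat d} yields $c\le m+1$, which is (1).

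For (2) assume $b<c$; in particular $c\ge 1$. The engine is a dimension shift: for every $n>b$ and every $N$ both $B$-terms flanking $\delta_{n+1}$ in the stretch $\Tor_{n+1}^R(B,N)\to\Tor_{n+1}^R(C,N)\xrightarrow{\delta_{n+1}}\Tor_n^R(A,N)\to\Tor_n^R(B,N)$ are uniformly $S$-torsion (because $n\ge b+1$), so by fact (ii) $\delta_{n+1}$ is an $S$-isomorphism and $\Tor_{n+1}^R(C,N)$ is $S$-isomorphic to $\Tor_n^R(A,N)$. Applying this with $n=c$ (allowed since $c>b$) and using that $\Tor_{c+1}^R(C,N)$ is uniformly $S$-torsion, we get $\Tor_c^R(A,N)$ uniformly $S$-torsion for all $N$, hence $a\le c-1$. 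For the reverse inequality choose, using $S$-$fd_R(C)=c$ together with Proposition \ref{s-flat d}, a module $N_0$ with $\Tor_c^R(C,N_0)$ not uniformly $S$-torsion. If $c>b+1$ then $n=c-1$ is still $>b$, so $\Tor_c^R(C,N_0)$ is $S$-isomorphic to $\Tor_{c-1}^R(A,N_0)$, forcing the latter to be non-torsion and hence $a\ge c-1$; combined with $a\le c-1$ this gives $a=c-1$, and here $c-1>b$, so $a>b$ as well.

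The point I expect to be genuinely delicate is the boundary case $c=b+1$, where $n=c-1=b$ is no longer $>b$ and the clean shift degenerates. Now only the left $B$-neighbour $\Tor_c^R(B,N)=\Tor_{b+1}^R(B,N)$ is uniformly $S$-torsion, so fact (ii) gives only that $\delta_c\colon\Tor_c^R(C,N)\to\Tor_{c-1}^R(A,N)=\Tor_b^R(A,N)$ is an $S$-monomorphism. Applying this to $N_0$: were $\Tor_b^R(A,N_0)$ uniformly $S$-torsion, then the product of a killer of it with a killer of $\Ker(\delta_c)$ would kill $\Tor_c^R(C,N_0)$, a contradiction; hence $\Tor_b^R(A,N_0)$ is not uniformly $S$-torsion and $a\ge b=c-1$, so again $a=c-1$. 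Thus the equality $a=c-1$ is robust, while the comparison with $b$ is the subtle part — the argument naturally delivers $a\ge b$, strict precisely when $c>b+1$, and I would want to double-check the boundary case against the asserted strict inequality before committing to it.
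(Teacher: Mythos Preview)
The paper does not actually prove this proposition --- it only says ``the proof is similar with that of the classical case (see \cite[Theorem 3.6.7]{fk16}); so we omit it.'' Your argument via the long $S$-exact $\Tor$-sequence of Theorem~\ref{s-iso-tor} together with the characterisation in Proposition~\ref{s-flat d} is exactly the $S$-analogue of that classical argument, and your proof of part~(1) and of the equality $S$-$fd_R(A)=S$-$fd_R(C)-1$ in part~(2) is correct.

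Your hesitation about the strict inequality is justified: the inequality $S$-$fd_R(A)>S$-$fd_R(B)$ as printed in the paper is simply wrong in the boundary case, and should read $\ge$. Already in the classical situation (take $S=\{1\}$) one sees this: choose any ring $R$ that is not von Neumann regular, pick a module $C$ with $fd_R(C)=1$, and write a short exact sequence $0\to A\to F\to C\to 0$ with $F$ free. Then $fd_R(B)=fd_R(F)=0<1=fd_R(C)$, while $fd_R(A)=fd_R(C)-1=0=fd_R(B)$, so the inequality is not strict. Your boundary-case analysis is therefore the correct one: the argument yields $a=c-1\ge b$, with strictness precisely when $c>b+1$. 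No further idea is missing from your proof; rather, the asserted strict inequality is a misprint in the statement.
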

\begin{proof} The proof  is similar with that of the classical case (see \cite[Theorem 3.6.7]{fk16}). So we omit it.
\end{proof}

Let $\p$ be a prime ideal of $R$ and $M$ an $R$-module. We denote by $\p$-$fd_R(M)$ $(R-\p)$-$fd_R(M)$ briefly. The next result gives a new local characterization of flat dimension of an $R$-module.

\begin{proposition}\label{fd-sfd}
Let $R$ be a ring and $M$ an $R$-module. Then
\begin{align*}
fd_R(M)=\sup\{\p\mbox{-}fd_R(M)|\p\in \Spec(R)\}=\sup\{\m\mbox{-}fd_R(M)|\m\in \Max(R)\} .
\end{align*}
\end{proposition}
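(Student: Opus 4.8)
The plan is to prove the two claimed equalities by a chain of inequalities. Write $d=fd_R(M)$, $s=\sup\{\p\text{-}fd_R(M)\mid \p\in\Spec(R)\}$ and $t=\sup\{\m\text{-}fd_R(M)\mid \m\in\Max(R)\}$. Since $\Max(R)\subseteq\Spec(R)$, trivially $t\le s$. Also, for each prime $\p$ the multiplicative set $R-\p$ is a particular choice of $S$, so by the general bound ``$S$-$fd_R(M)\le fd_R(M)$'' noted right after Definition~\ref{w-phi-flat } we get $\p\text{-}fd_R(M)\le d$ for every $\p$, hence $s\le d$. Thus it suffices to establish $d\le t$, which will close the loop $d\le t\le s\le d$.

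To prove $d\le t$, I would argue it suffices to treat the case $t=n<\infty$ (if $t=\infty$ there is nothing to prove) and show $fd_R(M)\le n$. The natural route is through the Tor characterization in Proposition~\ref{s-flat d}, together with the classical fact that flat dimension is detected by $\Tor_{n+1}^R(M,R/I)$ over all ideals $I$, i.e. $fd_R(M)\le n$ iff $\Tor_{n+1}^R(M,R/I)=0$ for all $I$. So fix an ideal $I$ and set $T=\Tor_{n+1}^R(M,R/I)$; the goal is $T=0$. For each maximal ideal $\m$, since $\m\text{-}fd_R(M)\le n$, part (4) of Proposition~\ref{s-flat d} (applied to $S=R-\m$) gives an element $s_\m\in R-\m$ with $s_\m T=0$. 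Localizing, this forces $T_\m=0$: indeed every element of $T$ is killed by some $s_\m\notin\m$, so $T_\m=0$ for all $\m\in\Max(R)$, and a module all of whose maximal localizations vanish is itself zero. Hence $T=0$ for every $I$, giving $fd_R(M)\le n=t$, as required.

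The one point needing care is whether the uniform annihilator $s_\m$ can be taken to work simultaneously for all $I$, or only for each fixed $I$. Here the cleaner reading is that I only need $T_\m=0$ for the fixed module $T$, and the statement ``$T_\m=0$ for all $\m$ implies $T=0$'' is standard; the uniformity across ideals $I$ that appears in Proposition~\ref{s-flat d}(4) is convenient but not essential for this direction, since I apply it one $I$ at a time. Thus the main (and only) subtlety is the passage from the local vanishing of each $\Tor$ module to the global flat-dimension bound, which is precisely the classical local-global principle for flat dimension; everything else is bookkeeping with the inclusions $\Max\subseteq\Spec$ and the basic bound $S$-$fd_R(M)\le fd_R(M)$. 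I expect no serious obstacle, the proof being essentially a localization argument feeding into Proposition~\ref{s-flat d}.
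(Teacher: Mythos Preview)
Your proof is correct and follows essentially the same route as the paper's: both establish the trivial inequalities $t\le s\le d$ and then prove $d\le t$ by invoking Proposition~\ref{s-flat d} to produce, for each maximal ideal $\m$, an element $s_\m\in R-\m$ annihilating the relevant $\Tor_{n+1}$ module, and then conclude that this module vanishes. The only cosmetic differences are that the paper works with an arbitrary test module $N$ rather than $R/I$, and phrases the vanishing step as ``the ideal generated by all $s^\m$ is $R$'' instead of your equivalent local--global formulation ``$T_\m=0$ for all $\m$ implies $T=0$''.
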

\begin{proof} Trivially, $\sup  \{\m\mbox{-}fd_R(M) |\ \m\in \Max(R)\} \leq \sup \{\p\mbox{-}fd_R(M) |\ \p\in \Spec(R)\} \leq fd_R(M)$.
Suppose $\sup  \{\m\mbox{-}fd_R(M) |\ \m\in \Max(R)\}=n$. For any $R$-module $N$, there exists an element $s^{\m}\in R-\m$ such that  $s^{\m}\Tor^R_{n+1}(M, N)=0$  by Proposition \ref{s-flat d}. Since the ideal generated by all $s^{\m}$ is $R$, we have $\Tor^R_{n+1}(M, N)=0$ for all $R$-modules $N$. So $fd_R(M)\leq n$. Suppose $\sup  \{\m\mbox{-}fd_R(M) |\ \m\in \Max(R)\}=\infty$. Then for any $n\geq 0$, there exists a maximal ideal $\m$ and an element $s^{\m}\in R-\m$ such that  $s^{\m}\Tor^R_{n+1}(M, N)\not=0$ for some $R$-module $N$. So for any $n\geq 0$, we have $\Tor^R_{n+1}(M, N)\not=0$ for some $R$-module $N$. Thus $fd_R(M)=\infty$. So the equalities hold.
\end{proof}

\section{On the $S$-weak global dimensions of rings}
Recall that the weak global dimension w.gl.dim$(R)$ of a ring $R$ is the supremum of $S$-flat dimensions of all $R$-modules. Now, we introduce the $S$-analogue of weak global dimensions of rings $R$ for a multiplicative subset $S$ of $R$.

\begin{definition}\label{w-phi-flat }
The $S$-weak global dimension of a ring $R$ is defined by
\begin{center}
$S$-w.gl.dim$(R) = \sup\{S$-$fd_R(M) | M $ is an $R$-module$\}.$
\end{center}
\end{definition}\label{def-wML}
Obviously, $S$-w.gl.dim$(R)\leq $w.gl.dim$(R)$ for any multiplicative subset $S$ of $R$.  And if $S$ is composed of units, then $S$-w.gl.dim$(R)=$w.gl.dim$(R)$ .  The next result characterizes the $S$-weak global dimension of a ring $R$.
\begin{proposition}\label{w-g-flat}
Let $R$ be a ring and $S$ a multiplicative subset of $R$. The following statements are equivalent for $R$:
\begin{enumerate}
  \item  $S$-w.gl.dim$(R)\leq  n$;
    \item  $S$-$fd_R(M)\leq n$ for all $R$-modules $M$;
    \item $\Tor^R_{n+k}(M, N)$ is uniformly $S$-torsion for all $R$-modules $M, N$ and all $k > 0$;
    \item  $\Tor^R_{n+1}(M, N)$ is uniformly $S$-torsion for all $R$-modules $M, N$;
     \item there exists an element $s\in S$  such that $s\Tor^R_{n+1}(R/I, R/J)$ for any ideals $I$ and $J$ of $R$.
\end{enumerate}
\end{proposition}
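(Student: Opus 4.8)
The plan is to push almost everything back onto the module-level Proposition \ref{s-flat d}, applied simultaneously to all modules, and to isolate the genuinely new content in the single implication $(5)\Rightarrow(2)$. Concretely I would run the cycle $(1)\Leftrightarrow(2)\Rightarrow(3)\Rightarrow(4)\Rightarrow(5)\Rightarrow(2)$. The equivalence $(1)\Leftrightarrow(2)$ is immediate from the definition of $S$-w.gl.dim$(R)$ as the supremum of the numbers $S$-$fd_R(M)$. For $(2)\Rightarrow(3)$, if $S$-$fd_R(M)\leq n$ for every $M$, then Proposition \ref{s-flat d} $((1)\Rightarrow(2))$ applied to each fixed $M$ gives that $\Tor^R_{n+k}(M,N)$ is uniformly $S$-torsion for all $N$ and all $k>0$; quantifying over $M$ yields $(3)$, and $(3)\Rightarrow(4)$ is the special case $k=1$.

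For $(4)\Rightarrow(5)$ I would use the standard assembling device: set $M=\bigoplus_{I\unlhd R}R/I$ and $N=\bigoplus_{J\unlhd R}R/J$, apply $(4)$ to this single pair to obtain one $s\in S$ with $s\Tor^R_{n+1}(M,N)=0$, and then exploit that $\Tor$ commutes with arbitrary direct sums in both variables, so that $\Tor^R_{n+1}(M,N)\cong\bigoplus_{I,J}\Tor^R_{n+1}(R/I,R/J)$. The single $s$ then annihilates every summand, which is exactly $(5)$.

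The heart of the argument is $(5)\Rightarrow(2)$, where I would combine the symmetry of $\Tor$ with Proposition \ref{s-flat d} to upgrade the uniform bound from cyclic modules to arbitrary ones in two stages, keeping the same $s$ throughout. Starting from a single $s$ with $s\Tor^R_{n+1}(R/I,R/J)=0$ for all ideals $I,J$, I first fix $J$: by the symmetry $\Tor^R_{n+1}(R/J,R/I)\cong\Tor^R_{n+1}(R/I,R/J)$ this reads $s\Tor^R_{n+1}(R/J,R/I)=0$ for all $I$, which is precisely condition $(4)$ of Proposition \ref{s-flat d} for the module $R/J$; invoking $((4)\Rightarrow(3))$ there gives $s\Tor^R_{n+1}(R/J,N)=0$ for every module $N$, with the same $s$. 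Since that $s$ is independent of $J$, I obtain $s\Tor^R_{n+1}(R/J,N)=0$ for all ideals $J$ and all modules $N$. In the second stage I fix an arbitrary module $M$, take $N=M$, and read the resulting identity backwards through symmetry as $s\Tor^R_{n+1}(M,R/J)=0$ for all $J$; this is condition $(4)$ of Proposition \ref{s-flat d} for $M$, so $S$-$fd_R(M)\leq n$. As $M$ was arbitrary, $(2)$ follows and the cycle closes.

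I expect the main obstacle to be the bookkeeping of the single element $s$ across these two upgrade stages: the whole point is that the annihilator chosen for cyclic modules must survive both the symmetry interchange and the transfinite filtration hidden inside Proposition \ref{s-flat d} $((4)\Rightarrow(3))$, since replacing $s$ by something depending on $I$ or $J$ would destroy the final application to an arbitrary $M$. Granting that the filtration argument of Proposition \ref{s-flat d} returns the same $s$, the only remaining care is the routine verification that the symmetry isomorphisms of $\Tor$ are $R$-linear, so that uniform annihilation is genuinely transported through them.
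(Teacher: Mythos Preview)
Your proof is correct and essentially the same as the paper's: the direct-sum trick for $(4)\Rightarrow(5)$ is identical, and your $(5)\Rightarrow(2)$ via two invocations of Proposition~\ref{s-flat d} combined with the symmetry of $\Tor$ is simply a cleaner repackaging of the paper's explicit double transfinite induction for $(5)\Rightarrow(4)$. Your concern about the single $s$ surviving both upgrade stages is well placed and does hold---the filtration argument inside Proposition~\ref{s-flat d} $((4)\Rightarrow(3))$ visibly keeps the same $s$ throughout---whereas the paper sidesteps this bookkeeping only by redoing that induction inline rather than quoting the earlier proposition.
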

\begin{proof}
$(1) \Rightarrow  (2)$ and $(3)\Rightarrow  (4)$: Trivial.

$(2) \Rightarrow  (3)$: Follows from Proposition \ref{s-flat d}.

$(4) \Rightarrow  (1)$: Let $M$ be an $R$-module and $0 \rightarrow F_n \rightarrow ...\rightarrow F_1\rightarrow F_0\rightarrow M\rightarrow 0$ an exact sequence, where $F_0, F_1, . . . , F_{n-1}$ are flat $R$-modules.
To complete the proof, it suffices, by Proposition \ref{s-flat d}, to prove that $F_n$ is
$S$-flat. Let $N$ be an $R$-module.
Thus $S$-$fd_R(N)\leq n$ by (4). It follows from Corollary \ref{big-Tor} that $\Tor^R_1 (N, F_n)\cong \Tor^R_{n+1}(N, M)$ is uniformly $S$-torsion. Thus $F_n$ is $S$-flat.

$(4)\Rightarrow  (5)$: Let $M=\bigoplus\limits_{I\unlhd R} R/I$ and $N=\bigoplus\limits_{J\unlhd R} R/J$. Then there exists $s\in S$ such that $$s\Tor^R_{n+1}(M, N)=s\bigoplus\limits_{I\unlhd R, J\unlhd R}\Tor_{n+1}^R(R/I,R/J)=0.$$
Thus $s\Tor_{n+1}^R(R/I,R/J)=0$ for any ideals $I,J$ of $R$.

$(5)\Rightarrow  (4)$: Suppose $M$ is generated by $\{m_i|i\in \Gamma\}$ and $N$ is generated by $\{n_i|i\in \Lambda\}$.  Set $M_0=0$ and $M_\alpha=\langle m_i|i<\alpha\rangle$  for each $\alpha\leq \Gamma$.  Then $M$ have a continuous   filtration  $\{M_\alpha|\alpha\leq \Gamma \}$ with $M_{\alpha+1}/M_\alpha\cong R/I_{\alpha+1} $ and $I_{\alpha}=\Ann_R(m_{\alpha}+M_\alpha\cap Rm_{\alpha})$. Similarly,  $N$ have a continuous   filtration  $\{N_\beta|\beta\leq \Lambda \}$ with $N_{\beta+1}/N_\beta\cong R/J_{\beta+1} $ and $J_{\beta}=\Ann_R(n_{\beta}+N_\beta\cap Rn_{\beta})$.  Since $s\Tor_{n+1}^R(R/I_{\alpha},R/J_{\beta})=0$ for each $\alpha\leq \Gamma$ and $\beta\leq \Lambda$, it is easy to verify $s\Tor_{n+1}^R(M,N)=0$ by transfinite induction on both positions of $M$ and $N$.
\end{proof}

The following Corollaries \ref{s-wgd-s}, \ref{s-swd-swd}, \ref{swd-swd} and \ref{wgld-swgld} can be deduced by Corollaries \ref{s-fd-s}, \ref{sfd-sfd}, \ref{sfd-sfd-1} and Proposition \ref{fd-sfd}.

\begin{corollary}\label{swd-swd}
Let $R$ be a ring and $S'\subseteq S$  multiplicative subsets of $R$. Then  $S$-w.gl.dim$(R) \leq S'$-w.gl.dim$(R)$.
\end{corollary}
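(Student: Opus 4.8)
The plan is to reduce the statement to the module-level inequality already proved in Corollary \ref{sfd-sfd-1} and then pass to the supremum. First I would recall the defining formulas
$$S\mbox{-w.gl.dim}(R) = \sup\{S\mbox{-}fd_R(M)\mid M \text{ is an } R\text{-module}\}, \qquad S'\mbox{-w.gl.dim}(R) = \sup\{S'\mbox{-}fd_R(M)\mid M \text{ is an } R\text{-module}\},$$
noting crucially that both suprema range over exactly the same index class, namely all $R$-modules $M$.

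Next I would invoke Corollary \ref{sfd-sfd-1}: since $S'\subseteq S$, for every fixed $R$-module $M$ we have the pointwise inequality $S\mbox{-}fd_R(M)\leq S'\mbox{-}fd_R(M)$. (The content underlying this is simply that a uniformly $S'$-torsion module is automatically uniformly $S$-torsion, together with the Tor-characterization of $S$-flat dimension in Proposition \ref{s-flat d}.) Having this inequality for every member of the common index class, I would then apply monotonicity of the supremum: whenever $a_M\leq b_M$ for all $M$ in a fixed index set, one has $\sup_M a_M\leq \sup_M b_M$, with the standard conventions covering the case where one side is $\infty$. Applying this with $a_M = S\mbox{-}fd_R(M)$ and $b_M = S'\mbox{-}fd_R(M)$ yields $S\mbox{-w.gl.dim}(R)\leq S'\mbox{-w.gl.dim}(R)$, which is exactly the claim.

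I do not expect any genuine obstacle here, as this is a one-step corollary: the entire mathematical substance already resides in Corollary \ref{sfd-sfd-1}, and the passage from a family of pointwise inequalities to the corresponding inequality of suprema is routine. The only point warranting a moment's care is the bookkeeping that both global dimensions are computed as suprema over the \emph{same} collection of modules, so that the comparison of suprema is legitimate term by term.
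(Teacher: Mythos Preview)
Your proposal is correct and matches the paper's own approach exactly: the paper simply states that this corollary can be deduced from Corollary~\ref{sfd-sfd-1}, and your argument spells out precisely that deduction---the pointwise inequality $S\mbox{-}fd_R(M)\leq S'\mbox{-}fd_R(M)$ followed by taking suprema over all $R$-modules.
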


\begin{corollary}\label{s-wgd-s}
Let $R$ be a ring and $S$ a multiplicative subset of $R$. If $S$-w.gl.dim$(R) \leq n$, then there exists an element $s\in S$ such that w.gl.dim$(R_s) \leq n$.
\end{corollary}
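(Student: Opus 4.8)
The plan is to reduce to the cyclic-module criterion of Proposition~\ref{w-g-flat} and then localize at a single element. First I would invoke the equivalence $(1)\Leftrightarrow(5)$ of Proposition~\ref{w-g-flat}: the hypothesis $S$-w.gl.dim$(R)\leq n$ yields one \emph{fixed} element $s\in S$ such that $s\Tor^R_{n+1}(R/I,R/J)=0$ for \emph{all} ideals $I$ and $J$ of $R$. The crucial feature here is that $s$ is independent of $I$ and $J$; this uniformity is precisely what the direct-sum argument behind $(4)\Leftrightarrow(5)$ provides, and it is what allows a single localization to control every pair of cyclic modules simultaneously.

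With this $s$ fixed, I would pass to $R_s$ and show w.gl.dim$(R_s)\leq n$ by testing on cyclic modules. Every ideal of $R_s$ is extended, of the form $I_s=IR_s$ for an ideal $I$ of $R$, and exactness of localization gives $R_s/I_s\cong(R/I)_s$. Since $R_s$ is flat over $R$, localizing a projective resolution of $R/I$ over $R$ computes $\Tor$ over $R_s$, so flat base change gives
\[
\Tor^{R_s}_{n+1}(R_s/I_s,R_s/J_s)\cong\big(\Tor^{R}_{n+1}(R/I,R/J)\big)_s .
\]
The element $s$ annihilates $\Tor^{R}_{n+1}(R/I,R/J)$ and becomes invertible in $R_s$, so the right-hand side vanishes. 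Hence $\Tor^{R_s}_{n+1}(R_s/I',R_s/J')=0$ for all ideals $I',J'$ of $R_s$.

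To finish, I would apply Proposition~\ref{w-g-flat} to the ring $R_s$ with the multiplicative set consisting of its units: there ``uniformly $S$-torsion'' degenerates to ``zero'', so the vanishing just obtained is exactly condition $(5)$ for $R_s$, giving w.gl.dim$(R_s)\leq n$. This is the route I find cleanest; it is essentially the weak-global-dimension repackaging of Corollary~\ref{s-fd-s}, and indeed one can argue the intended way by applying Corollary~\ref{s-fd-s} to $M=\bigoplus_{I\unlhd R}R/I$ and using $fd(\bigoplus)=\sup fd$ to recover $fd_{R_s}(R_s/I')\leq n$ for every ideal $I'$.

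The only steps that require care are the standard localization facts marshalled in the middle paragraph: that every ideal of $R_s$ is extended, the isomorphism $R_s/I_s\cong(R/I)_s$, and the flat base-change identity for higher $\Tor$. None of these is deep, but the argument stands or falls on invoking them correctly and on remembering that the single uniform $s$ coming from Proposition~\ref{w-g-flat}$(5)$ is what lets one localization do the whole job; once that is in hand the conclusion is immediate.
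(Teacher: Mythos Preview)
Your proof is correct and is essentially the paper's intended argument. The paper simply cites Corollary~\ref{s-fd-s} as the source of the deduction, and your main route via Proposition~\ref{w-g-flat}(5) is precisely the global version of the proof of Corollary~\ref{s-fd-s} (which uses Proposition~\ref{s-flat d}(4) in the same way); your alternative route, applying Corollary~\ref{s-fd-s} to $M=\bigoplus_{I\unlhd R}R/I$ and using $fd_{R_s}(\bigoplus M_i)=\sup fd_{R_s}(M_i)$, is exactly how one would unpack the paper's one-line citation.
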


\begin{corollary}\label{s-swd-swd}
Let $R$ be a ring and $S$ a multiplicative subset of $R$. Then  $S$-w.gl.dim$(R)\leq$ w.gl.dim$(R_S)$. Moreover, if $S$ is composed of finite elements, then $S$-w.gl.dim$(R)=$w.gl.dim$(R_S)$.
\end{corollary}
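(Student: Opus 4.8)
The plan is to treat the general inequality and the equality for finitely generated $S$ separately, the former via the Tor description in Proposition \ref{w-g-flat} and the latter via the module-wise Corollary \ref{sfd-sfd}. If w.gl.dim$(R_S)=\infty$ the bound is vacuous, so throughout I assume w.gl.dim$(R_S)=n<\infty$ and aim to prove $S$-w.gl.dim$(R)\leq n$.

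For the general inequality I would first reduce to cyclic modules: by condition $(5)$ of Proposition \ref{w-g-flat} it is enough to exhibit a single $s\in S$ with $s\Tor^R_{n+1}(R/I,R/J)=0$ for all ideals $I,J$ of $R$. Every ideal of $R_S$ has the form $I_S$ for some ideal $I$ of $R$, and since localization is exact and commutes with Tor there is a natural isomorphism $\Tor^{R_S}_{n+1}(R_S/I_S,R_S/J_S)\cong(\Tor^R_{n+1}(R/I,R/J))_S$. As w.gl.dim$(R_S)=n$, the left-hand term vanishes, so every $\Tor^R_{n+1}(R/I,R/J)$ is $S$-torsion; equivalently, with $M=\bigoplus_{I}R/I$ and $N=\bigoplus_{J}R/J$ as in the proof of Proposition \ref{w-g-flat}, the module $\Tor^R_{n+1}(M,N)$ is $S$-torsion. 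A second route to the same point is to take a projective resolution of a given module over $R$, pass to its $n$-th syzygy $F_n$, note that $(F_n)_S$ is flat over $R_S$ because $fd_{R_S}(M_S)\leq n$, and then attempt to descend flatness of $(F_n)_S$ to $S$-flatness of $F_n$ by \cite[Proposition 3.8]{zwz21}.

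The step I expect to be the main obstacle is exactly the promotion from ``$S$-torsion'' to ``uniformly $S$-torsion''. The localization argument annihilates each element of $\Tor^R_{n+1}(R/I,R/J)$ only by some element of $S$ depending on that element and on the pair $(I,J)$, whereas Proposition \ref{w-g-flat} demands one $s$ that kills all of these simultaneously, and a direct sum of uniformly $S$-torsion modules whose annihilators are unbounded in $S$ need not itself be uniformly $S$-torsion. The syzygy approach meets the same difficulty, since the descent via \cite[Proposition 3.8]{zwz21} of flatness over $R_S$ to $S$-flatness over $R$ is available only when $S$ is finitely generated. This is precisely where the finiteness hypothesis becomes decisive: when $S$ is generated by finitely many elements, Corollary \ref{sfd-sfd} yields $S$-$fd_R(M)=fd_{R_S}(M_S)$ for every $R$-module $M$, and taking the supremum over all $M$---observing that as $M$ runs over $R$-modules, $M_S$ runs over all $R_S$-modules---gives $S$-w.gl.dim$(R)=$w.gl.dim$(R_S)$, which settles the equality clause.
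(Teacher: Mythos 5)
Your handling of the equality clause is correct and is in fact exactly the paper's own argument: the paper proves this corollary in one line, by citing Corollary \ref{sfd-sfd} and taking suprema, using precisely your observation that as $M$ ranges over $R$-modules, $M_S$ ranges over all $R_S$-modules. One caution there: ``composed of finite elements'' in this paper means that $S$ is a finite set (as in the example $S=\{1,s\}$ with $s$ idempotent), not ``generated by finitely many elements'' as you paraphrase it; a multiplicative set generated by a single non-unit is already infinite, and the descent step you invoke from \cite[Proposition 3.8]{zwz21} is only available under the finiteness hypothesis as stated, so your reading silently weakens the hypothesis.

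For the general inequality, the obstacle you flagged---promoting elementwise $S$-torsion of $\Tor^R_{n+1}(R/I,R/J)$ to uniform $S$-torsion by a single $s\in S$---is not a technical difficulty awaiting a trick: it is insurmountable, because the inequality $S$-w.gl.dim$(R)\leq$ w.gl.dim$(R_S)$ as printed is false. Take $R=k[x]$ and $S=k[x]\setminus\{0\}$. Then $R_S=k(x)$ has weak global dimension $0$, but $\Tor_1^{R}\bigl(R/(f),R/(f)\bigr)\cong R/(f)$ for each irreducible $f$, and no single nonzero $s\in k[x]$ can annihilate $R/(f)$ for every irreducible $f$ (that would force every irreducible polynomial to divide $s$); hence $S$-w.gl.dim$(R)\geq 1>0=$ w.gl.dim$(R_S)$. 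This is exactly the $n=0$ case of the paper's own Example \ref{s-wgld-1-not-wgld-1}, which in general gives $S$-w.gl.dim$(R)\geq n+1>n=$ w.gl.dim$(R_S)$. The statement of the corollary contains a misprint: Corollary \ref{sfd-sfd}, from which the paper deduces it, asserts $S$-$fd_R(M)\geq fd_{R_S}(M_S)$ for every $M$, and taking suprema yields $S$-w.gl.dim$(R)\geq$ w.gl.dim$(R_S)$, the reverse of what is printed. So the correct resolution of your ``main obstacle'' was not to leave it open but to conclude that the $\leq$ direction cannot hold and that the intended inequality is $\geq$---which your own localization computation already proves when run in the other direction: $\Tor^{R_S}_{n+1}(M_S,N_S)\cong(\Tor^R_{n+1}(M,N))_S$ vanishes whenever $\Tor^R_{n+1}(M,N)$ is uniformly $S$-torsion, so w.gl.dim$(R_S)\leq S$-w.gl.dim$(R)$.

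In summary: your equality argument coincides with the paper's; your attempt at the stated inequality correctly isolates the uniformity issue but misdiagnoses it as a gap in your proof rather than an error in the statement, and a referee-level reading should have turned the paper's own Example \ref{s-wgld-1-not-wgld-1} into a counterexample and flipped the inequality.
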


The following example shows that the converse of Corollary \ref{s-swd-swd} is not  true in general.
\begin{example}\label{s-wgld-1-not-wgld-1}
Let $R=k[x_1,x_2,...,x_{n+1}]$ be a polynomial ring with $n+1$ indeterminates over a field $k$ $(n\geq 0)$. Set $S=k[x_1]-\{0\}$. Then $S$ is a multiplicative subset of $R$ and $R_S=k(x_1)[x_2,...,x_{n+1}]$ is a polynomial ring with $n$ indeterminates over the field $k(x_1)$. So w.gl.dim$(R_S)=n$ by \cite[Theorem 3.8.23]{fk16}. Let  $s\in S$, we have $R_s=k[x_1]_s[x_2,...,x_{n+1}]$. Since $k[x_1]$ is not a G-domain, $k[x_1]_s$ is not a field $($see \cite[Theorem 21]{K70}$)$. Thus w.gl.dim$(k[x_1]_s)=1$. So w.gl.dim$(R_s)=n+1$ for any $s\in S$ by  \cite[Theorem 3.8.23]{fk16} again. Consequently  $S$-w.gl.dim$(R)\geq n+1$ by Corollary \ref{s-wgd-s}.
\end{example}

Let $\p$ be a prime ideal of a ring $R$ and $\p$-$w.gl.dim(R)$ denote $(R-\p)$-$w.gl.dim(R)$ briefly.
We have a new local characterization of  weak global dimensions of commutative rings.
\begin{corollary}\label{wgld-swgld}
Let $R$ be a ring. Then
\begin{align*}
&w.gl.dim(R)=\sup\{\p\mbox{-}w.gl.dim(R)|\p\in \Spec(R)\}=\sup\{\m\mbox{-}w.gl.dim(R)|\m\in \Max(R)\} .
\end{align*}
\end{corollary}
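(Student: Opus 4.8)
The plan is to deduce the two ring-level equalities from the module-level identity of Proposition \ref{fd-sfd} by merely interchanging suprema. First I would unwind the definitions. By the definition of the $S$-weak global dimension applied to $S=R-\p$, one has $w.gl.dim(R)=\sup\{fd_R(M)\mid M \text{ an } R\text{-module}\}$ and $\p\mbox{-}w.gl.dim(R)=\sup\{\p\mbox{-}fd_R(M)\mid M \text{ an } R\text{-module}\}$, and likewise with $\m$ in place of $\p$.

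Next, for each fixed $R$-module $M$, I would invoke Proposition \ref{fd-sfd} in the form
\[
fd_R(M)=\sup\{\p\mbox{-}fd_R(M)\mid \p\in\Spec(R)\}=\sup\{\m\mbox{-}fd_R(M)\mid \m\in\Max(R)\}.
\]
Taking the supremum over all $R$-modules $M$ and then swapping the order of the two suprema, I would obtain
\[
w.gl.dim(R)=\sup_M\,\sup_{\p\in\Spec(R)}\p\mbox{-}fd_R(M)=\sup_{\p\in\Spec(R)}\,\sup_M\p\mbox{-}fd_R(M)=\sup_{\p\in\Spec(R)}\p\mbox{-}w.gl.dim(R),
\]
together with the identical chain using $\Max(R)$ in place of $\Spec(R)$. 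This delivers both claimed equalities simultaneously.

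I do not anticipate a genuine obstacle. The interchange $\sup_x\sup_y=\sup_y\sup_x$ holds for any doubly-indexed family valued in $\mathbb{N}\cup\{\infty\}$, so at the ring level the argument is purely formal. All the arithmetic substance --- in particular the nontrivial inequality $fd_R(M)\le\sup_{\m}\m\mbox{-}fd_R(M)$, whose proof in Proposition \ref{fd-sfd} uses that the witnessing localizing elements $s^{\m}$ generate the unit ideal of $R$ --- has already been absorbed into the module-level statement. Thus the corollary is a formal consequence of Proposition \ref{fd-sfd} and the definitions of $w.gl.dim$ and $\p\mbox{-}w.gl.dim$.
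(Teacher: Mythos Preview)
Your proposal is correct and matches the paper's approach exactly: the paper simply states that Corollary~\ref{wgld-swgld} can be deduced from Proposition~\ref{fd-sfd}, and your argument---take the module-level identity, pass to the supremum over all $M$, and interchange the two suprema---is precisely the intended deduction.
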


The rest of this this section mainly consider rings with $S$-weak global dimensions at most one. Recall from \cite{zwz21} that a ring $R$ is called  $S$-von Neumann regular  provided that there exists  $s\in S$ such that  for any $a\in R$ there exists  $r\in R$ such that $sa=ra^2$. Thus by \cite[Theorem 3.11]{zwz21}, the following result holds.
\begin{corollary}\label{s-vn-ext-char}
Let $R$ be a ring and $S$ a multiplicative subset of $R$. The following assertions are equivalent:
\begin{enumerate}
\item  $R$ is an $S$-von Neumann regular ring;
\item    for any $R$-module $M$ and $N$, there exists  $s\in S$ such that $s\Tor_1^R(M,N)=0$;
\item  there exists  $s\in S$ such that $s\Tor_1^R(R/I,R/J)=0$    for any ideals $I$ and $J$ of $R$;
\item  any $R$-module is $R$-flat;
\item  $S$-w.gl.dim$(R)=0$.
\end{enumerate}
\end{corollary}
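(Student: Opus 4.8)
The plan is to recognize that this corollary carries no new content of its own: it is precisely Proposition \ref{w-g-flat} specialized to $n=0$, together with the module-theoretic description of $S$-von Neumann regularity recorded in \cite[Theorem 3.11]{zwz21}. Accordingly, rather than constructing an independent argument I would set up an explicit dictionary between the five conditions stated here and the conditions already proved equivalent in the earlier results, and then simply invoke those results.

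First I would dispatch the equivalence of (2), (3), (4), and (5). Putting $n=0$ in Proposition \ref{w-g-flat} yields the mutual equivalence of the following: $S$-w.gl.dim$(R)\leq 0$; every $R$-module being $S$-flat (because $S$-$fd_R(M)\leq 0$ says exactly that $M$ is $S$-flat); $\Tor^R_1(M,N)$ being uniformly $S$-torsion for all $R$-modules $M,N$; and the existence of a single $s\in S$ with $s\Tor^R_1(R/I,R/J)=0$ for all ideals $I,J$. These are, in order, conditions (5), (4), (2), and (3) of the present corollary, so their equivalence is immediate. The only point worth a remark is that $S$-w.gl.dim$(R)\leq 0$ is the same as $S$-w.gl.dim$(R)=0$, since by its definition as a supremum of the nonnegative quantities $S$-$fd_R(M)$ the $S$-weak global dimension is always $\geq 0$.

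It then remains only to attach (1) to this cluster, and here I would appeal directly to \cite[Theorem 3.11]{zwz21}, which asserts that $R$ is $S$-von Neumann regular if and only if every $R$-module is $S$-flat; the latter is condition (4), closing the chain. I expect no genuine obstacle, as the substance has been absorbed into Proposition \ref{w-g-flat} and \cite[Theorem 3.11]{zwz21}, and what is left is the careful matching of the $n=0$ instances. The single item I would flag is that condition (4), as printed, reads ``$R$-flat'' where ``$S$-flat'' is evidently intended; with the literal reading the statement would assert that every module is flat and the equivalence would collapse, so I would read it as $S$-flat throughout.
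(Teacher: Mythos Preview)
Your proposal is correct and matches the paper's approach exactly: the paper simply states that the corollary holds ``by \cite[Theorem 3.11]{zwz21}'', which together with the $n=0$ case of Proposition~\ref{w-g-flat} gives precisely the dictionary you describe. Your observation that ``$R$-flat'' in (4) is a typo for ``$S$-flat'' is also correct.
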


Trivially, von Neumann regular rings are $S$-von Neumann regular, and if a ring $R$ is $S$-von Neumann regular ring then $R_S$ is von Neumann regular. It was proved in \cite[Proposition 3.17]{zwz21} that if the multiplicative subset $S$ of $R$ is composed of non-zero-divisors, then $R$ is $S$-von Neumann regular if and only if  $R$ is  von Neumann regular. Examples of $S$-von Neumann regular rings that are not  von Neumann regular, and a ring $R$ for which $R_S$ is  von Neumann regular but $R$ is not $S$-von Neumann regular are given in  \cite{zwz21}.

\begin{proposition}\label{s-wgld-1}
Let $R$ be a ring and $S$ a multiplicative subset of $R$. The following assertions are equivalent:
\begin{enumerate}
\item  $S$-w.gl.dim$(R)\leq 1$;
\item   any submodule of $S$-flat modules is $S$-flat;
\item   any submodule of flat modules is $S$-flat;
\item   $\Tor^R_{2}(M, N)$ is uniformly $S$-torsion for all $R$-modules $M, N$;
\item   there exists an element $s\in S$ such that $s\Tor_2^R(R/I,R/J)=0$ for any ideals $I,J$ of $R$.
\end{enumerate}
\end{proposition}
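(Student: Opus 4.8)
The plan is to reduce the numerical conditions to the already-proven Proposition~\ref{w-g-flat} and then close a short implication cycle through the two submodule conditions. First I would observe that the equivalences $(1)\Leftrightarrow(4)\Leftrightarrow(5)$ require no new work: they are precisely the case $n=1$ of Proposition~\ref{w-g-flat}, since condition (4) here is statement (4) of that proposition with $n=1$, and condition (5) here is its statement (5) with $n=1$. Thus it remains only to weave conditions (2) and (3) into the equivalence, and I would do so by proving the cycle $(1)\Rightarrow(2)\Rightarrow(3)\Rightarrow(1)$.

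The implication $(2)\Rightarrow(3)$ is immediate: every flat module is $S$-flat, so a submodule of a flat module is in particular a submodule of an $S$-flat module and hence $S$-flat by (2). For $(3)\Rightarrow(1)$, I would take an arbitrary $R$-module $M$ and a short exact sequence $0\rightarrow K\rightarrow F_0\rightarrow M\rightarrow 0$ with $F_0$ free (hence flat, hence $S$-flat). By (3) the submodule $K$ of the flat module $F_0$ is $S$-flat, so this sequence is an $S$-flat $S$-resolution of length $1$, giving $S$-$fd_R(M)\leq 1$ by Proposition~\ref{s-flat d}. As $M$ was arbitrary, $S$-w.gl.dim$(R)\leq 1$.

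The only implication carrying genuine content is $(1)\Rightarrow(2)$. Here I would let $F$ be $S$-flat and $A\subseteq F$ a submodule, and consider the honest short exact sequence $0\rightarrow A\rightarrow F\rightarrow F/A\rightarrow 0$, which is in particular $S$-exact with $S$-flat middle term. Applying Corollary~\ref{big-Tor} with $C=F/A$, for every $R$-module $N$ the module $\Tor^R_2(F/A,N)$ is $S$-isomorphic to $\Tor^R_1(A,N)$, and in particular one is uniformly $S$-torsion if and only if the other is. Now invoking $(1)\Leftrightarrow(4)$, the module $\Tor^R_2(F/A,N)$ is uniformly $S$-torsion, so $\Tor^R_1(A,N)$ is uniformly $S$-torsion for every $N$, whence $A$ is $S$-flat by Lemma~\ref{s-flat-ext}.

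Assembling the cycle together with $(1)\Leftrightarrow(4)\Leftrightarrow(5)$ yields the full equivalence. The step demanding the most care is $(1)\Rightarrow(2)$: the naive route is to chase the long $\Tor$ sequence of Theorem~\ref{s-iso-tor} and track the accumulating elements $s\in S$ at the relevant spot, but Corollary~\ref{big-Tor} already packages exactly this bookkeeping, so the argument collapses to a one-line dimension shift once one notices that the $S$-flatness of $F$ is precisely what licenses the corollary. I would therefore lean on Corollary~\ref{big-Tor} rather than reprove any connecting-homomorphism estimates by hand.
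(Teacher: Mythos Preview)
Your proof is correct and matches the paper's approach; the paper's own proof is the single sentence ``The equivalences follow from Proposition~\ref{w-g-flat},'' and you have simply unpacked that line, handling $(1)\Leftrightarrow(4)\Leftrightarrow(5)$ by direct specialization to $n=1$ and then supplying the short cycle $(1)\Rightarrow(2)\Rightarrow(3)\Rightarrow(1)$ that the paper leaves implicit. Your use of Corollary~\ref{big-Tor} for the step $(1)\Rightarrow(2)$ is exactly the dimension-shift underlying Proposition~\ref{s-flat d}(7), so there is no substantive difference in method.
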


\begin{proof}
The equivalences follow from Proposition \ref{w-g-flat}.
\end{proof}

The following lemma can be found in \cite[Chapter 1 Exercise 6.3]{FS01} for integral domains. However it is also true for any commutative rings and we give a proof for completeness.
\begin{lemma}\label{tor-2}
Let $R$ be a ring and $I,J$ ideals of $R$, then $\Tor_2^R(R/I,R/J)\cong\Ker(\phi)$ where $\phi:I\otimes J\rightarrow IJ$ is an $R$-homomorphism, where  $\phi(a\otimes b)=ab$.
\end{lemma}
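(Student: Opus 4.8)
The plan is to compute $\Tor_2^R(R/I,R/J)$ by a double dimension shift, exploiting the freeness (hence flatness) of $R$ to annihilate the higher Tor groups of $R$ itself. Note that the statement is purely classical---it does not involve $S$---so I would use the ordinary long exact sequences of $\Tor$.

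First I would apply the homology functor $\Tor_\ast^R(R/I,-)$ to the short exact sequence $0\to J\to R\to R/J\to 0$. Since $R$ is free, $\Tor_n^R(R/I,R)=0$ for all $n\geq 1$, so the long exact sequence of $\Tor$ collapses around degree $2$ to give an isomorphism $\Tor_2^R(R/I,R/J)\cong \Tor_1^R(R/I,J)$. Next I would apply $\Tor_\ast^R(-,J)$ to the short exact sequence $0\to I\to R\to R/I\to 0$. Again $\Tor_1^R(R,J)=0$ by flatness of $R$, so the tail of the long exact sequence reads $0\to \Tor_1^R(R/I,J)\xrightarrow{\partial} I\otimes_R J\to R\otimes_R J$, which identifies $\Tor_1^R(R/I,J)$ with the kernel of the map $I\otimes_R J\to R\otimes_R J$ induced by the inclusion $I\hookrightarrow R$.

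Finally I would identify this kernel with $\Ker(\phi)$. Under the canonical isomorphism $R\otimes_R J\cong J$, the map $I\otimes_R J\to R\otimes_R J$ becomes the map $I\otimes_R J\to J$ sending $a\otimes b\mapsto ab$; this factors as the surjection $\phi\colon I\otimes_R J\twoheadrightarrow IJ$ followed by the inclusion $IJ\hookrightarrow J$. Since the inclusion is injective, the kernel of the composite equals $\Ker(\phi)$, and chaining the two isomorphisms yields $\Tor_2^R(R/I,R/J)\cong\Ker(\phi)$.

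I do not expect a serious obstacle here, as the argument is a routine double dimension shift. The only point requiring genuine care is the last identification: one must verify that the boundary map $\partial$ composed with $I\otimes_R J\to R\otimes_R J$ is truly the map induced by $I\hookrightarrow R$, and then track it through the identification $R\otimes_R J\cong J$ in order to recognize the multiplication map $\phi$ and confirm that passing to the subideal $IJ$ does not change the kernel.
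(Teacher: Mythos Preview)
Your proof is correct and follows essentially the same route as the paper: both arguments first use the short exact sequence $0\to J\to R\to R/J\to 0$ to obtain $\Tor_2^R(R/I,R/J)\cong\Tor_1^R(R/I,J)$, and then use $0\to I\to R\to R/I\to 0$ to identify $\Tor_1^R(R/I,J)$ with the kernel of the multiplication map $I\otimes_R J\to R\otimes_R J\cong J$. Your added remark that this map factors as the surjection $\phi:I\otimes_R J\twoheadrightarrow IJ$ followed by the inclusion $IJ\hookrightarrow J$, so that its kernel coincides with $\Ker(\phi)$, is a welcome clarification that the paper leaves implicit.
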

\begin{proof} Let $I$ and $J$ be ideals of $R$, then $\Tor_2^R(R/I,R/J)\cong \Tor_1^R(R/I,J)$. Consider the following exact sequence:
$0\rightarrow \Tor_1^R(R/I,J)\rightarrow I\otimes_RJ\xrightarrow{\phi} R\otimes_R J$, where $\phi$ is an $R$-homomorphism such that   $\phi(a\otimes b)=ab$. We have $\Tor_2^R(R/I,R/J)\cong\Ker(\phi)$.
\end{proof}


Trivially,  a ring $R$ with w.gl.dim$(R)\leq 1$ has $S$-weak global dimension at most one. The following example shows the converse does not hold generally.

\begin{example}\label{s-wgld-1-not-wgld-1}
Let $A$ be a ring with w.gl.dim$(A)= 1$, $T=A\times A$  the direct product of $A$. Set $s=(1,0)\in T$, then $s^2=s$. Let $R=T[x]/\langle sx,x^2\rangle$ with $x$ an indeterminate  and $S=\{1,s\}$ be a multiplicative subset of $R$. Then $S$-w.gl.dim$(R)=1$ but w.gl.dim$(R)=\infty$.
\end{example}
\begin{proof} Note that every element in $R$ can be written as $r=(a,b)+(0,c)x$ where $a,b,c\in A$.
 Let $f:R\rightarrow A$  be a ring homomorphism where $f((a,b)+(0,c)x)=a$. Then $f$ makes $A$ a module retract of $R$. Let $I$ and $J$ be ideals of $R$. Suppose $r_1=(a_1,b_1)+(0,c_1)x$ and $r_2=(a_2,b_2)+(0,c_2)$ are elements in $I$ and $J$ respectively such that  $r_1\otimes r_2\in \Ker(\phi)$,  where $\phi:I\otimes_R J\rightarrow IJ$ is the multiplicative homomorphism. Then $r_1r_2=(a_1a_2,b_1b_2)+(0,b_1c_2+b_2c_1)x=0$, so $a_1a_2=0$ in $A$.  By Lemma \ref{tor-2}, $a_1\otimes_Aa_2=0$ in $f(I)\otimes_Af(J)$ since w.gl.dim$(A)=1$. Consequently $s^2r_1\otimes_R r_2=sr_1\otimes_R sr_2=(a_1,0)\otimes_R(a_2,0)=0$ in $I\otimes J$. So $s^2\Tor_2^R(R/I,R/J)=0$ by Lemma \ref{tor-2}. It follows that $S$-w.gl.dim$(R)\leq 1$ by Proposition \ref{s-wgld-1}. Since $R_S\cong A$ have weak global dimension $1$,  $S$-w.gl.dim$(R)= 1$ by Corollary \ref{s-vn-ext-char} and \cite[Corollary 3.14]{zwz21}. Since $R$ is non-reduced coherent ring, then w.gl.dim$(R)=\infty$ by \cite[Corollary 4.2.4]{g}.
\end{proof}

\section{$S$-weak global dimensions of factor rings and  polynomial rings}

In this section, we mainly consider the $S$-weak global dimensions of factor rings and  polynomial rings. Firstly, we give an inequality of $S$-weak global dimensions for ring homomorphisms. Let $\theta:R\rightarrow T$ be a ring homomorphism. Suppose $S$ is a multiplicative subset of $R$, then $\theta(S)=\{\theta(s)|s\in S\}$  is a multiplicative subset of $T$.

\begin{lemma}\label{tor-s-poly-1}
Let $\theta:R\rightarrow T$ be a ring homomorphism, $S$ a multiplicative subset of $R$. Suppose $L$ is a $\theta(S)$-flat $T$-module. Then for any $R$-module $X$ and any $n\geq 0$,  $\Tor_n^R(X,L)$ is $S$-isomorphic to $\Tor_n^R(X,T)\otimes_TL.$ Consequently, $S$-$fd_{R}(L)\leq S$-$fd_{R}(T)$.
\end{lemma}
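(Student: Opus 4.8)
The plan is to reduce the assertion to a ``$\theta(S)$-flat base change'' computation of homology and then to transfer uniform $\theta(S)$-torsion over $T$ to uniform $S$-torsion over $R$. Fix a projective resolution $P_\bullet\rightarrow X$ of $X$ over $R$ and set $Q_\bullet=P_\bullet\otimes_R T$, a complex of flat (indeed projective) $T$-modules with $H_n(Q_\bullet)=\Tor_n^R(X,T)$. The associativity isomorphism $P_\bullet\otimes_R L\cong (P_\bullet\otimes_R T)\otimes_T L=Q_\bullet\otimes_T L$ identifies $\Tor_n^R(X,L)$ with $H_n(Q_\bullet\otimes_T L)$. Thus it suffices to produce, for each fixed $n$, an $S$-isomorphism between $H_n(Q_\bullet)\otimes_T L$ and $H_n(Q_\bullet\otimes_T L)$. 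Note the case $n=0$ is an honest isomorphism, since $\Tor_0^R(X,T)\otimes_T L=(X\otimes_R T)\otimes_T L=X\otimes_R L$.

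For the core computation, write $Z_n=\Ker(d_n)$ and $B_n=\Im(d_{n+1})$ for the differentials $d_n$ of $Q_\bullet$, giving genuine short exact sequences of $T$-modules $0\rightarrow Z_n\rightarrow Q_n\rightarrow B_{n-1}\rightarrow 0$, $0\rightarrow B_n\rightarrow Z_n\rightarrow H_n(Q_\bullet)\rightarrow 0$, and $0\rightarrow B_{i}\rightarrow Q_{i}\rightarrow Q_{i}/B_{i}\rightarrow 0$. Applying $-\otimes_T L$ and writing $\bar d_n=d_n\otimes_T\mathrm{id}_L$, I would use that $L$ is $\theta(S)$-flat: by Lemma \ref{s-flat-ext} applied to the pair $(T,\theta(S))$, every group $\Tor^T_1(-,L)$ that arises is uniformly $\theta(S)$-torsion, while the flatness of each $Q_i$ kills $\Tor^T_{\geq 1}(Q_i,L)$. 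Chasing the resulting long exact sequences shows that the natural map $Z_n\otimes_T L\rightarrow \Ker(\bar d_n)$ has kernel and cokernel that are subquotients of suitable $\Tor^T_1(-,L)$ groups, hence is a $\theta(S)$-isomorphism, and that $\Im(\bar d_{n+1})$ equals the image of $B_n\otimes_T L\rightarrow Q_n\otimes_T L$, whose kernel is again a quotient of such a $\Tor^T_1$ group.

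These data fit into a commutative ladder whose rows are the right-exact presentations $H_n(Q_\bullet)\otimes_T L=\Coker(B_n\otimes_T L\rightarrow Z_n\otimes_T L)$ and $H_n(Q_\bullet\otimes_T L)=\Coker(B_n\otimes_T L\rightarrow \Ker(\bar d_n))$, with left vertical maps the identity and the $\theta(S)$-isomorphism just produced. Applying Theorem \ref{s-5-lemma} (or a direct diagram chase) yields that the induced map on cokernels is a $\theta(S)$-isomorphism. Since $s$ acts on any $T$-module through $\theta(s)$, every uniformly $\theta(S)$-torsion $T$-module is uniformly $S$-torsion as an $R$-module; hence this is an $S$-isomorphism of $R$-modules, and the finitely many annihilators entering the argument are combined into a single $s\in S$ by taking products. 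By Proposition \ref{s-iso-inv} the direction of the $S$-isomorphism is immaterial, which proves the main assertion.

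For the consequence, suppose $S\mbox{-}fd_R(T)=m<\infty$ (otherwise there is nothing to prove). By Proposition \ref{s-flat d}, $\Tor_{m+1}^R(X,T)$ is uniformly $S$-torsion for every $R$-module $X$; being a $T$-module annihilated by some $\theta(s)$, its tensor product over $T$ with $L$ is again annihilated by $\theta(s)$, hence uniformly $S$-torsion. The main result shows $\Tor_{m+1}^R(X,L)$ is $S$-isomorphic to $\Tor_{m+1}^R(X,T)\otimes_T L$, and an $S$-isomorph of a uniformly $S$-torsion module is uniformly $S$-torsion; so $\Tor_{m+1}^R(X,L)$ is uniformly $S$-torsion for all $X$, whence $S\mbox{-}fd_R(L)\leq m$ by Proposition \ref{s-flat d}. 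The main obstacle is the base-change computation of the second and third paragraphs: unlike the flat case, $-\otimes_T L$ is only $\theta(S)$-exact, so each identification of cycles and boundaries with those of the tensored complex carries a $\Tor_1^T$ error term, and the real work is to check that all these errors are uniformly $\theta(S)$-torsion and to package them into a single $\theta(S)$-isomorphism on homology.
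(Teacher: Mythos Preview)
Your argument is correct. Both your proof and the paper's rest on the same two ingredients---the $\theta(S)$-flatness of $L$ forcing every relevant $\Tor_1^T(-,L)$ to be uniformly $\theta(S)$-torsion, and the $S$-analogue of the Five Lemma (Theorem~\ref{s-5-lemma})---but the organisation differs genuinely. The paper fixes a single short exact sequence $0\to A\to P\to X\to 0$ with $P$ free, compares the two resulting four-term exact sequences (one from $-\otimes_R T$ then $-\otimes_T L$, one from $-\otimes_R L$) via a commutative square, invokes Theorem~\ref{s-5-lemma} to get the case $n=1$, and then reaches all $n$ by dimension shifting. You instead work with the whole projective resolution $P_\bullet$ at once, set $Q_\bullet=P_\bullet\otimes_R T$, and prove a universal-coefficient style comparison $H_n(Q_\bullet)\otimes_T L\to H_n(Q_\bullet\otimes_T L)$ by tracking cycles and boundaries; the error terms in identifying $Z_n\otimes_T L$ with $\Ker(\bar d_n)$ and $\Im(B_n\otimes_T L)$ with $\Im(\bar d_{n+1})$ are subquotients of $\Tor_1^T(B_{n-1},L)$ and $\Tor_1^T(Q_{n-1}/B_{n-1},L)$, exactly as you say. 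Your route treats all $n$ uniformly and makes the analogy with classical flat base change transparent; the paper's route keeps the diagram at each step smaller at the cost of an induction. The deduction of the inequality $S\mbox{-}fd_R(L)\le S\mbox{-}fd_R(T)$ is essentially identical in both.
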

\begin{proof} If $n=0$, then $X\otimes_RL\cong X\otimes_R(T\otimes_TL)\cong(X\otimes_RT)\otimes_TL.$

If $n=1$, let $0\rightarrow A\rightarrow P\rightarrow X\rightarrow 0$ be an exact sequence of $R$-modules where $P$ is free. Thus we have two exact sequences of $T$-module: $0\rightarrow \Tor_1^R(X,T)\rightarrow A\otimes_RT\rightarrow P\otimes_RT\rightarrow X\otimes_RT\rightarrow 0$ and  $0\rightarrow \Tor_1^R(X,L)\rightarrow A\otimes_RL\rightarrow P\otimes_RL\rightarrow X\otimes_RL\rightarrow 0.$ Consider the following commutative diagram with exact sequence:
$$\xymatrix@R=20pt@C=20pt{
0\ar[r]^{}  & 0 \ar[r]^{} \ar[d]^{} & \Tor_1^R(X,L)\ar[d]_{h}\ar[r]^{} &A\otimes_RL \ar[d]^{\cong}\ar[r]^{} &P\otimes_RL\ar[d]^{\cong} \\
0\ar[r]^{} &\Ker(\delta)\ar[r]^{} &\Tor_1^R(X,T)\otimes_TL\ar[r]^{\delta}  & (A\otimes_RT)\otimes_TL  \ar[r]^{} & (P\otimes_RT)\otimes_TL .\\}$$
Since $L$ is a $\theta(S)$-flat $T$-module, $\delta$ is a $\theta(S)$-monomorphism. By Theorem \ref{s-5-lemma}, $h$ is a $\theta(S)$-isomorphism over $T$. So $h$ is an $S$-isomorphism over $R$ since $T$-modules are viewed as $R$-modules through $\theta$. By dimension-shifting, we can obtain that $\Tor_n^R(X,L)$ is $S$-isomorphic to $\Tor_n^R(X,T)\otimes_TL$  for any $R$-module $X$ and any $n\geq 0$.

Thus for any $R$-module $X$, if $\Tor_n^R(X,T)$ is uniformly $S$-torsion, then $\Tor_n^R(X,L)$  is also uniformly $S$-torsion. Consequently, $S$-$fd_{R}(L)\leq S$-$fd_{R}(T)$.
\end{proof}

\begin{proposition}\label{sfd-changring}
Let $\theta:R\rightarrow T$ be a ring homomorphism, $S$ a multiplicative subset of $R$. Suppose $M$ is an $T$-module. Then
\begin{center}
$S$-$fd_{R}(M)\leq\theta(S)$-$fd_{T}(M)+S$-$fd_{R}(T).$
\end{center}
\end{proposition}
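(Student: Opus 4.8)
The plan is to proceed by induction on $\theta(S)$-$fd_T(M) = m$, reducing to the base case via dimension-shifting over $T$, and in the base case invoke Lemma \ref{tor-s-poly-1} together with the subadditivity of $S$-flat dimension under $S$-exact sequences established in Proposition \ref{sfd-exact}. Write $n = S$-$fd_R(T)$. If either quantity on the right is infinite there is nothing to prove, so assume both $m$ and $n$ are finite.

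\textbf{Base case $m=0$.} Here $M$ is a $\theta(S)$-flat $T$-module, so Lemma \ref{tor-s-poly-1} applies directly and gives $S$-$fd_R(M)\leq S$-$fd_R(T) = n$, which is exactly the claimed inequality $n + 0$.

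\textbf{Inductive step.} Suppose $m = \theta(S)$-$fd_T(M)\geq 1$ and the inequality holds for all $T$-modules of strictly smaller $\theta(S)$-flat dimension. Choose a short exact sequence of $T$-modules
$$0\rightarrow K\rightarrow F\rightarrow M\rightarrow 0$$
with $F$ a free (hence $\theta(S)$-flat) $T$-module. By Proposition \ref{sfd-exact}(2) applied over $T$, since $\theta(S)$-$fd_T(F)=0 < m = \theta(S)$-$fd_T(M)$, we get $\theta(S)$-$fd_T(K) = \theta(S)$-$fd_T(M)-1 = m-1$. Now view this same sequence as a short exact (hence $S$-exact) sequence of $R$-modules via $\theta$. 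Applying Proposition \ref{sfd-exact}(1) over $R$ yields
$$S\mbox{-}fd_R(M)\leq 1 + \max\{S\mbox{-}fd_R(K),\, S\mbox{-}fd_R(F)\}.$$
The free $T$-module $F$ is $\theta(S)$-flat, so by the base case $S$-$fd_R(F)\leq n$, while the induction hypothesis applied to $K$ gives $S$-$fd_R(K)\leq (m-1) + n$. Hence the maximum is bounded by $(m-1)+n$, and therefore $S$-$fd_R(M)\leq 1 + (m-1) + n = m + n$, as desired.

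\textbf{The main obstacle} I anticipate is handling the dimension bookkeeping cleanly when $M$ itself is $\theta(S)$-flat relative to $T$ but has positive $S$-flat dimension over $R$ — this is precisely why the base case cannot be trivial and must route through Lemma \ref{tor-s-poly-1}, whose content is that $\Tor_n^R(X,M)$ is $S$-isomorphic to $\Tor_n^R(X,T)\otimes_T M$. The one subtlety to verify is that Proposition \ref{sfd-exact}(2) genuinely gives the claimed drop $\theta(S)$-$fd_T(K)=m-1$ rather than merely an inequality; this requires $\theta(S)$-$fd_T(F)=0 < \theta(S)$-$fd_T(M)$, which holds because $m\geq 1$. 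An alternative that avoids Proposition \ref{sfd-exact}(2) is to bound $S$-$fd_R(K)$ directly: take a length-$m$ $\theta(S)$-flat $\theta(S)$-resolution of $M$ over $T$, so $K$ admits a length-$(m-1)$ such resolution, giving $\theta(S)$-$fd_T(K)\leq m-1$, which suffices for the same conclusion since we only need the upper bound.
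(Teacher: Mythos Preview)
Your proof is correct and follows essentially the same approach as the paper: induction on $\theta(S)$-$fd_T(M)$, with the base case handled by Lemma \ref{tor-s-poly-1} and the inductive step via a short exact sequence $0\to K\to F\to M\to 0$ over $T$ with $F$ free, combined with Proposition \ref{sfd-exact}(1) over $R$. The only cosmetic differences are that the paper obtains $\theta(S)$-$fd_T(K)=m-1$ from Corollary \ref{big-Tor} and Proposition \ref{s-flat d} rather than Proposition \ref{sfd-exact}(2), and it bounds $S$-$fd_R(F)$ by observing directly that $S$-$fd_R(F)=S$-$fd_R(T)$ since $F$ is free over $T$, whereas you invoke the base case; both routes are equivalent.
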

\begin{proof} Assume $\theta(S)$-$fd_{T}(M)=n<\infty$. If $n=0$, then $M$ is  $\theta(S)$-flat  over $T$. By Lemma \ref{tor-s-poly-1}, $S$-$fd_{R}(M)\leq n+S$-$fd_{R}(T)$.

Now we assume $n>0$. Let $0\rightarrow A\rightarrow F\rightarrow M\rightarrow 0$ be an exact sequence of $T$-modules, where $F$ is a free $T$-module. Then $\theta(S)$-$fd_{T}(A)=n-1$ by Corollary \ref{big-Tor} and Proposition \ref{s-flat d}. By induction, $S$-$fd_{R}(A)\leq n-1+S$-$fd_{R}(T)$. Note that $S$-$fd_{R}(T)=S$-$fd_{R}(F)$. By Proposition \ref{sfd-exact}, we have
\begin{align*}
 S\mbox{-}fd_{R}(M)&\leq 1+\max\{S\mbox{-}fd_{R}(F), S\mbox{-}fd_{R}(A)\} \\
  &\leq 1+n-1+S\mbox{-}fd_{R}(T) \\
   &=\theta(S)\mbox{-}fd_{T}(M)+S\mbox{-}fd_{R}(T).
\end{align*}
\end{proof}

Let $R$ be a ring, $I$ an ideal of $R$ and  $S$  a multiplicative subset of $R$. Then $\pi:R\rightarrow R/I$ is a ring epimorphism and $\pi(S):=\overline{S}=\{s+I\in R/I|s\in S\}$ is naturally a multiplicative subset of $R/I$.

\begin{proposition}\label{s-fd-poly-3}
Let $R$ be a ring, $S$ a multiplicative subset of $R$. Let $a\in R$ be neither a zero-divisor nor a unit. Written $\overline{R}=R/aR$ and $\overline{S}=\{s+aR\in \overline{R}|s\in S\}$. Then the following assertions hold.
\begin{enumerate}
\item  Let $M$ be a nonzero $\overline{R}$-module. If  $\overline{S}$-$fd_{\overline{R}}(M)<\infty$, then
\begin{center}
$S$-$fd_{R}(M)=\overline{S}$-$fd_{\overline{R}}(M)+1.$
\end{center}
\item   If $\overline{S}$-w.gl.dim$(\overline{R})<\infty$, then
\begin{center}
$S$-w.gl.dim$(R) \geq \overline{S}$-w.gl.dim$(\overline{R})+1$.
\end{center}
\end{enumerate}
\end{proposition}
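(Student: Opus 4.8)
I would prove the equality in (1) by establishing the two inequalities separately, and then deduce (2). For the upper bound, note first that since $a$ is a non-zero-divisor the sequence $0\to R\xrightarrow{a}R\to\overline{R}\to 0$ is a free resolution, so $S\mbox{-}fd_R(\overline{R})\leq fd_R(\overline{R})\leq 1$. Writing $n=\overline{S}\mbox{-}fd_{\overline{R}}(M)$ and applying Proposition \ref{sfd-changring} to the canonical surjection $\pi\colon R\to\overline{R}$ (for which $\pi(S)=\overline{S}$) gives
$$S\mbox{-}fd_R(M)\leq \overline{S}\mbox{-}fd_{\overline{R}}(M)+S\mbox{-}fd_R(\overline{R})\leq n+1.$$

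For the reverse inequality the essential input is a change-of-rings comparison for ordinary $\Tor$. Since $a$ is a non-zero-divisor, lifting an $\overline{R}$-free resolution of $M$ to $R$ along $0\to R\xrightarrow{a}R\to\overline{R}\to 0$ and forming the associated mapping cone produces an $R$-free resolution of $M$; tensoring it with any $\overline{R}$-module $N$ makes every term carrying a factor $a$ vanish, because $aN=0$. This yields a natural isomorphism
$$\Tor_k^R(M,N)\cong \Tor_k^{\overline{R}}(M,N)\oplus\Tor_{k-1}^{\overline{R}}(M,N),\qquad k\geq 0,$$
for all $\overline{R}$-modules $M$ and $N$ (with the convention $\Tor_{-1}^{\overline{R}}=0$). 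In particular $\Tor_n^{\overline{R}}(M,N)$ is a direct summand of $\Tor_{n+1}^R(M,N)$.

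The bookkeeping step I would isolate as a lemma: if $T$ is an $\overline{R}$-module and $s\in S$, then $sT=0$ exactly when $\overline{s}\,T=0$, so $T$ is uniformly $S$-torsion over $R$ if and only if it is uniformly $\overline{S}$-torsion over $\overline{R}$. Now, because $\overline{S}\mbox{-}fd_{\overline{R}}(M)=n$, Proposition \ref{s-flat d} (applied over $\overline{R}$ with $\overline{S}$) furnishes, for $n\geq 1$, an $\overline{R}$-module $N$ with $\Tor_n^{\overline{R}}(M,N)$ not uniformly $\overline{S}$-torsion (this is precisely the failure of $\overline{S}\mbox{-}fd_{\overline{R}}(M)\leq n-1$); for $n=0$ one instead takes $N=\overline{R}$, so that $\Tor_0^{\overline{R}}(M,\overline{R})\cong M$, using that $M$ is not uniformly $\overline{S}$-torsion. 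If $\Tor_{n+1}^R(M,N)$ were uniformly $S$-torsion, any annihilating $s\in S$ would also annihilate its direct summand $\Tor_n^{\overline{R}}(M,N)$, making the latter uniformly $\overline{S}$-torsion by the translation lemma, a contradiction. Hence $\Tor_{n+1}^R(M,N)$ is not uniformly $S$-torsion, so $S\mbox{-}fd_R(M)\geq n+1$ by Proposition \ref{s-flat d}, and together with the upper bound this proves (1).

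Part (2) follows quickly: writing $m=\overline{S}\mbox{-w.gl.dim}(\overline{R})<\infty$ and using that $\overline{S}$-flat dimensions lie in $\mathbb{N}\cup\{\infty\}$, the supremum $m$ is attained by some nonzero $\overline{R}$-module $M$ with $\overline{S}\mbox{-}fd_{\overline{R}}(M)=m$, and (1) gives $S\mbox{-}fd_R(M)=m+1$, so $S\mbox{-w.gl.dim}(R)\geq m+1$. The \textbf{main obstacle} is the lower bound of (1): one must both set up the change-of-rings splitting and, more importantly, propagate the \emph{uniform} torsion bound (a single annihilator $s$) through it. The genuinely delicate point is the case $n=0$, where the argument requires $M$, equivalently $\overline{R}$, not to be uniformly $S$-torsion, i.e. $aR\cap S=\emptyset$; this holds automatically in the target application to polynomial rings, where $a=x$, $S\subseteq R$, and no nonzero element of $S$ lies in $xR[x]$.
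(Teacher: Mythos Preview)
Your upper bound for (1) and the deduction of (2) from (1) agree with the paper. The gap is in your lower bound: the change-of-rings decomposition
\[
\Tor_k^R(M,N)\;\cong\;\Tor_k^{\overline{R}}(M,N)\oplus\Tor_{k-1}^{\overline{R}}(M,N)
\]
is \emph{false} in general. When you lift an $\overline{R}$-free resolution $(P_\bullet,d)$ to free $R$-modules $(\tilde P_\bullet,\tilde d)$, the lifts satisfy only $\tilde d_{i-1}\tilde d_i\equiv 0\pmod a$; the $R$-free resolution you build (the total complex) must carry correction maps $h_i:\tilde P_i\to\tilde P_{i-2}$ with $\tilde d_{i-1}\tilde d_i=a\,h_i$, and these do \emph{not} vanish upon tensoring with an $\overline{R}$-module $N$. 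What one actually gets is the two-row change-of-rings spectral sequence with a possibly nonzero differential $d_2:\Tor_p^{\overline{R}}(M,N)\to\Tor_{p-2}^{\overline{R}}(M,N)$, i.e.\ a long exact sequence rather than a splitting. Concretely, take $R=\mathbb{Z}[i]$, $a=2$, so $\overline{R}\cong\mathbb{F}_2[t]/(t^2)$ with $t=1+i$, and let $M=N=\overline{R}/(t)\cong\mathbb{F}_2$. Then $\Tor_k^{\overline{R}}(M,N)\cong\mathbb{F}_2$ for every $k\ge 0$, while $\Tor_2^R(M,N)=0$ because $R$ is a PID; so $\Tor_1^{\overline{R}}(M,N)$ is certainly not a direct summand of $\Tor_2^R(M,N)$, and your argument breaks down.

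The paper obtains the lower bound differently. Knowing $\overline{S}\mbox{-}fd_{\overline{R}}(M)=n$, it first replaces the witness module by an \emph{injective} $\overline{R}$-module $C$ with $\Tor_n^{\overline{R}}(M,C)$ not uniformly $\overline{S}$-torsion (embed the bad module into its injective hull and use that $\Tor_{n+1}^{\overline{R}}(M,-)$ is already uniformly $\overline{S}$-torsion). It then invokes a genuine change-of-rings isomorphism from \cite[Proposition~3.8.12(4)]{fk16}: for such $C$ there is an injective $R$-module $E$ with an exact sequence $0\to C\to E\xrightarrow{a}E\to 0$, and one has $\Tor_{n+1}^R(M,E)\cong\Tor_n^{\overline{R}}(M,C)$. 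This single isomorphism, valid because $E$ is chosen carefully, replaces your failed global splitting. Your closing remark about the $n=0$ case requiring that $M$ not be uniformly $S$-torsion is well observed; it is a genuine hidden hypothesis (satisfied in the polynomial-ring application, where $xR[x]\cap S=\emptyset$) and applies equally to the paper's argument.
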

\begin{proof} (1) Set $\overline{S}$-$fd_{\overline{R}}(M)=n$. Since $a\in R$ be neither a zero-divisor nor a unit, then $S$-$fd_R(\overline{R})=1$. By Proposition \ref{sfd-changring}, we have $S$-$fd_R(M)\leq \overline{S}$-$fd_{\overline{R}}(M)+1=n+1.$ Since  $\overline{S}$-$fd_{\overline{R}}(M)=n$, then there is an injective $\overline{R}$-module $C$ such that $\Tor_n^{\overline{R}}(M,C)$ is not uniformly $\overline{S}$-torsion. By \cite[Theorem 2.4.22]{fk16}, there is an injective $R$-module $E$ such that $0\rightarrow C\rightarrow E\rightarrow E\rightarrow 0$ is exact. By \cite[Proposition 3.8.12(4)]{fk16}, $\Tor_{n+1}^{R}(M,E)\cong \Tor_n^{\overline{R}}(M,C)$. Thus $\Tor_{n+1}^{R}(M,E)$ is not uniformly $S$-torsion. So $S$-$fd_{R}(M)=\overline{S}$-$fd_{\overline{R}}(M)+1$.

(2) Let $n=\overline{S}$-w.gl.dim$(\overline{R})$. Then there is a nonzero $\overline{R}$-module $M$ such that $\overline{S}$-$fd_{\overline{R}}(M)=n$. Thus $S$-$fd_{R}(M)=n+1$ by (1). So $S$-w.gl.dim$(R) \geq \overline{S}$-w.gl.dim$(\overline{R})+1$.
\end{proof}

Let $R$ be a ring and $M$ an $R$-module. $R[x]$ denotes the  polynomial ring with one indeterminate, where all coefficients are in $R$. Set $M[x]=M\otimes_RR[x]$, then $M[x]$ can be seen as an $R[x]$-module naturally. It is well-known w.gl.dim$(R[x])=$w.gl.dim$(R)$ (see \cite[Theorem 3.8.23]{fk16}). In this section, we give a $S$-analogue of this result. Let $S$ be a multiplicative subset of $R$, then $S$ is a multiplicative subset of $R[x]$ naturally.

\begin{lemma}\label{tor-s-poly}
Let $R$ be a ring, $S$ a multiplicative subset of $R$. Suppose $T$ is an $R$-module and  $F$ is an $R[x]$-module. Then the following assertions hold.
\begin{enumerate}
\item   $T$ is  uniformly $S$-torsion over $R$ if and only if $T[x]$ is a uniformly $S$-torsion $R[x]$-module.
\item  If  $F$ is $S$-flat over $R[x]$, then $F$ is  $S$-flat over $R$.
\end{enumerate}
\end{lemma}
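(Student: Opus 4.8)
The plan is to prove the two assertions independently: (1) by a direct coefficientwise argument, and (2) by reducing to Lemma \ref{s-flat-ext} via a flat base-change isomorphism for Tor. For (1), I would exploit that, as an $R$-module, $T[x]=T\otimes_R R[x]$ is just $\bigoplus_{i\ge 0}Tx^i$, so every element is a finite sum $\sum_i t_ix^i$ with $t_i\in T$. Because $S\subseteq R\subseteq R[x]$, each $s\in S$ is a constant polynomial and acts coefficientwise, $s\cdot\sum_i t_ix^i=\sum_i(st_i)x^i$. Hence for a fixed $s\in S$ one has $sT[x]=0$ if and only if $sT=0$, and both implications of (1) fall out at once, since the distinguished killing element lies in $R$ in either direction.

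For (2), the key input will be the isomorphism
$$\Tor_n^{R[x]}(N\otimes_R R[x],F)\cong\Tor_n^R(N,F)\qquad(n\ge 0)$$
for every $R$-module $N$, where $F$ is viewed as an $R$-module by restriction along $R\hookrightarrow R[x]$. I would prove this by choosing a free resolution $P_\bullet\to N$ over $R$; since $R[x]$ is free, hence flat, over $R$, the complex $P_\bullet\otimes_R R[x]$ is a free resolution of $N\otimes_R R[x]$ over $R[x]$, and the cancellation $(P_i\otimes_R R[x])\otimes_{R[x]}F\cong P_i\otimes_R F$ identifies the complex $(P_\bullet\otimes_R R[x])\otimes_{R[x]}F$ computing the left-hand Tor with the complex $P_\bullet\otimes_R F$ computing the right-hand one. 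Crucially this identification is built from $R$-linear maps, so it is an isomorphism of $R$-modules.

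Granting this, (2) is immediate. Fix an arbitrary $R$-module $N$. As $F$ is $S$-flat over $R[x]$, Lemma \ref{s-flat-ext} gives that the $R[x]$-module $\Tor_1^{R[x]}(N\otimes_R R[x],F)$ is uniformly $S$-torsion, so $s\cdot\Tor_1^{R[x]}(N\otimes_R R[x],F)=0$ for some $s\in S$. Since $s\in R$ and the isomorphism above is $R$-linear, this same $s$ annihilates $\Tor_1^R(N,F)$. Thus $\Tor_1^R(N,F)$ is uniformly $S$-torsion for every $R$-module $N$, which by Lemma \ref{s-flat-ext} is precisely the assertion that $F$ is $S$-flat over $R$.

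The only delicate point is the bookkeeping of module structures: I must check that the base-change isomorphism is genuinely $R$-linear (not merely additive) and that the torsion witness $s$ produced from the $R[x]$-module structure automatically lies in $R$, so that it also witnesses uniform $S$-torsion over $R$. Both hold because $S$ is contained in $R$ and every map used in the identification is $R$-linear, so no real obstruction arises; the homological content is entirely concentrated in the standard flat-base-change isomorphism.
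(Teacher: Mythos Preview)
Your proof is correct and follows essentially the same strategy as the paper: part (1) via the coefficientwise action of $s\in S\subseteq R$ on $T[x]=\bigoplus_{i\ge 0}Tx^i$, and part (2) via a flat base-change Tor isomorphism combined with Lemma~\ref{s-flat-ext}. The only difference is cosmetic: you invoke the cancellation form $\Tor^{R[x]}_1(N[x],F)\cong\Tor^R_1(N,F)$ (and supply its proof), whereas the paper cites the base-change form $\Tor^R_1(F,L)[x]\cong\Tor^{R[x]}_1(F[x],L[x])$ from \cite{g}; your choice is arguably cleaner, since it applies directly to $F$ rather than to $F[x]$ and so connects immediately to the hypothesis that $F$ is $S$-flat over $R[x]$.
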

\begin{proof} $(1)$ If $sT[x]=0$ for some $s\in S$, then trivially $sT=0$. So  $T$ is  uniformly $S$-torsion over $R$. Suppose $sT=0$ for some $s\in S$. Then $sT[x]\cong (sT)[x]=0$. Thus $T[x]$ is a uniformly $S$-torsion $R[x]$-module.

$(2)$ Suppose $F$ is an $S$-flat $R[x]$-module. By \cite[Theorem 1.3.11]{g}, $\Tor^R_{1}(F,L)[x]\cong \Tor^{R[x]}_{1}(F[x],L[x])$ is uniformly $S$-torsion. Thus there exists an element $s\in S$ such that $s\Tor^R_{1}(F,L)[x]=0$. So $s\Tor^R_{1}(F,L)=0$. It follows that $F$ is an $S$-flat $R$-module.
\end{proof}

\begin{proposition}\label{s-fd-poly}
Let $R$ be a ring, $S$ a multiplicative subset of $R$ and $M$ an $R$-module. Then $S$-$fd_{R[x]}(M[x])=S$-$fd_R(M)$.
\end{proposition}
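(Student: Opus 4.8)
The plan is to establish the two inequalities $S$-$fd_{R[x]}(M[x])\leq S$-$fd_R(M)$ and $S$-$fd_R(M)\leq S$-$fd_{R[x]}(M[x])$ separately. In both directions the bridge between the two rings will be the base-change isomorphism $\Tor^R_k(U,V)[x]\cong\Tor^{R[x]}_k(U[x],V[x])$ already used in Lemma \ref{tor-s-poly}, together with the elementary identity $A\otimes_{R[x]}F[x]\cong A\otimes_RF$, valid (and $R[x]$-linearly natural) for any $R[x]$-module $A$ and any $R$-module $F$ by associativity of the tensor product, where on the right $A$ is regarded as an $R$-module. In each inequality the right-hand dimension may be assumed finite, the infinite case being automatic.

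First I would prove $S$-$fd_R(M)\leq S$-$fd_{R[x]}(M[x])$. Put $m=S$-$fd_{R[x]}(M[x])$ and let $N$ be an arbitrary $R$-module. Viewing $N[x]$ as an $R[x]$-module, Proposition \ref{s-flat d} gives that $\Tor^{R[x]}_{m+1}(M[x],N[x])$ is uniformly $S$-torsion over $R[x]$. By the base-change isomorphism this module is $\Tor^R_{m+1}(M,N)[x]$, so Lemma \ref{tor-s-poly}(1) shows that $\Tor^R_{m+1}(M,N)$ is uniformly $S$-torsion over $R$. As $N$ is arbitrary, Proposition \ref{s-flat d} yields $S$-$fd_R(M)\leq m$. (This inequality also drops out of Proposition \ref{sfd-changring} applied to the inclusion $R\hookrightarrow R[x]$, since $R[x]$ is free, hence $S$-flat, over $R$, and $S$-$fd_R(M[x])=S$-$fd_R(M)$ because $M[x]\cong M^{(\mathbb{N})}$ over $R$ and $\Tor$ commutes with direct sums.)

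For the reverse inequality, set $n=S$-$fd_R(M)$ and choose an $S$-flat $S$-resolution $0\to F_n\to\cdots\to F_0\to M\to0$ over $R$. Since $R[x]$ is free, hence flat, over $R$, the functor $-\otimes_RR[x]$ preserves $S$-exactness (the $k$-th homology is tensored by the flat module $R[x]$ and so stays annihilated by the same element of $S$), whence $0\to F_n[x]\to\cdots\to F_0[x]\to M[x]\to0$ is an $S$-exact sequence of $R[x]$-modules. Thus it suffices to show that each $F_i[x]$ is $S$-flat over $R[x]$: granting this, the displayed sequence is an $S$-flat $S$-resolution of $M[x]$ of length $n$, and the definition of $S$-flat dimension gives $S$-$fd_{R[x]}(M[x])\leq n$.

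The heart of the matter, and the step I expect to require genuine care, is therefore the claim that $F$ being $S$-flat over $R$ forces $F[x]$ to be $S$-flat over $R[x]$. To see it, take any short exact sequence $0\to A\to B\to C\to0$ of $R[x]$-modules and apply $-\otimes_{R[x]}F[x]$. By the identity $A\otimes_{R[x]}F[x]\cong A\otimes_RF$ and its naturality, the resulting complex is exactly the one obtained by restricting $0\to A\to B\to C\to0$ to $R$ and applying $-\otimes_RF$. Restriction of scalars is exact, so the restricted sequence is a short exact sequence of $R$-modules, and since $F$ is $S$-flat over $R$, Lemma \ref{s-flat-ext} makes $0\to A\otimes_RF\to B\otimes_RF\to C\otimes_RF\to0$ an $S$-exact sequence over $R$; its homologies are killed by a single $s\in S\subseteq R[x]$, so the same sequence is $S$-exact over $R[x]$. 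Hence $F[x]$ is $S$-flat over $R[x]$, which completes the proof.
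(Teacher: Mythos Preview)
Your proof is correct. The ingredients are the same as in the paper---Proposition \ref{s-flat d}, Lemma \ref{tor-s-poly}, and the base-change isomorphism $\Tor^R_k(U,V)[x]\cong\Tor^{R[x]}_k(U[x],V[x])$---but you deploy them in the opposite directions. For the inequality $S\text{-}fd_{R[x]}(M[x])\leq S\text{-}fd_R(M)$ the paper invokes directly the isomorphism $\Tor^{R[x]}_{n+1}(M[x],L)\cong\Tor^R_{n+1}(M,L)$, valid for an \emph{arbitrary} $R[x]$-module $L$ (this is \cite[Theorem 1.3.11]{g}), which settles it in one line; you instead extend an $S$-flat $S$-resolution along $-\otimes_RR[x]$ and then verify that $F$ $S$-flat over $R$ implies $F[x]$ $S$-flat over $R[x]$ via $A\otimes_{R[x]}F[x]\cong A\otimes_RF$. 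For the reverse inequality the paper restricts an $S$-flat $R[x]$-resolution of $M[x]$ to $R$ using Lemma \ref{tor-s-poly}(2) and then reads off the result from $\Tor^R_{n+1}(M[x],N)\cong\bigoplus\Tor^R_{n+1}(M,N)$, while you test against modules of the form $N[x]$ and use Lemma \ref{tor-s-poly}(1). Your route makes explicit the useful companion to Lemma \ref{tor-s-poly}(2)---that $S$-flatness ascends along $R\hookrightarrow R[x]$---at the cost of a slightly longer argument; the paper's route is shorter because the isomorphism $\Tor^{R[x]}_{n+1}(M[x],L)\cong\Tor^R_{n+1}(M,L)$ handles all $R[x]$-modules $L$ at once.
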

\begin{proof}

Assume that $S$-$fd_R(M)\leq n$. Then $\Tor^R_{n+1}(M,N)$ is uniformly $S$-torsion for any $R$-module $N$ by Proposition \ref{s-flat d}. Thus for any $R[x]$-module $L$, $\Tor^{R[x]}_{n+1}(M[x],L)\cong \Tor^R_{n+1}(M,L)$  is uniformly $S$-torsion for any $R[x]$-module $L$ by \cite[Theorem 1.3.11]{g}. Consequently, $S$-$fd_{R[x]}(M[x])\leq n$ by Proposition \ref{s-flat d}.

Let $0\rightarrow F_n \rightarrow ...\rightarrow F_1\rightarrow F_0\rightarrow M[x]\rightarrow 0$ be an exact sequence with each $F_i$ $S$-flat over $R[x]$ ($1\leq i\leq n$). Then it is also $S$-flat resolution of $M[x]$ over $R$  by Lemma \ref{tor-s-poly}. Thus $\Tor_{n+1}^R(M[x],N)$ is  uniformly $S$-torsion for any $R$-module $N$ by Proposition \ref{s-flat d}. It follows that  $s\Tor_{n+1}^R(M[x],N)=s \bigoplus\limits_{i=1}^{\infty} \Tor_{n+1}^R(M,N)=0 $. Thus $\Tor_{n+1}^R(M,N)$ is  uniformly $S$-torsion. Consequently, $S$-$fd_R(M)\leq S$-$fd_{R[x]}(M[x])$ by Proposition \ref{s-flat d} again.
\end{proof}

Let $M$ be an $R[x]$-module then $M$ can be naturally viewed as an $R$-module. Define $\psi:M[x]\rightarrow M$ by $$
\psi(\sum\limits_{i=0}^nx^i\otimes m_i)=\sum\limits_{i=0}^nx^i m_i,\qquad m_i\in M.$$ And define $\varphi:M[x]\rightarrow M[x]$ by $$\varphi(\sum\limits_{i=0}^nx^i\otimes m_i)=\sum\limits_{i=0}^nx^{i+1}\otimes m_i-\sum\limits_{i=0}^nx^i\otimes xm_i,\qquad m_i\in M.$$

\begin{lemma}\cite[Theorem 3.8.22]{fk16}\label{exact-s-poly}
Let $R$ be a ring, $S$ a multiplicative subset of $R$. For any $R[x]$-module $M$, $$0\rightarrow M[x]\xrightarrow{\varphi} M[x]\xrightarrow{\psi} M\rightarrow 0$$
is exact.
\end{lemma}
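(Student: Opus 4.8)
The plan is to establish exactness at each of the three spots separately, working with the canonical description of $M[x]=M\otimes_RR[x]$ as a free $R[x]$-object: since $R[x]$ is $R$-free on $\{x^i\}_{i\ge 0}$, every element of $M[x]$ has a \emph{unique} expression $\sum_{i=0}^n x^i\otimes m_i$ with $m_i\in M$, and the $R[x]$-action is $x\cdot(x^i\otimes m)=x^{i+1}\otimes m$. Throughout, $x^im$ for $m\in M$ will denote the \emph{original} $R[x]$-module action of $x^i$ on $M$, so that the two roles of $x$ do not collide.

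First, surjectivity of $\psi$ (exactness at the right-hand spot) is immediate, since $\psi(1\otimes m)=m$ for every $m\in M$. Next I would check $\psi\circ\varphi=0$ by a one-line computation: $\psi(\varphi(x^i\otimes m))=\psi(x^{i+1}\otimes m-x^i\otimes xm)=x^{i+1}m-x^i(xm)=0$, so that $\Im(\varphi)\subseteq\Ker(\psi)$. For injectivity of $\varphi$ (exactness at the left-hand spot), I would collect $\varphi(\sum_{i=0}^nx^i\otimes m_i)$ by powers of $x$ in the free factor: the coefficient of $x^{n+1}$ is exactly $m_n$, while the coefficient of $x^k$ for $1\le k\le n$ is $m_{k-1}-xm_k$ and the coefficient of $x^0$ is $-xm_0$. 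If the image vanishes, uniqueness of the representation makes the top coefficient force $m_n=0$, and then a downward induction through these coefficients forces $m_{n-1}=\cdots=m_0=0$. Hence $\varphi$ is injective.

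The main obstacle is exactness in the middle, i.e. $\Ker(\psi)\subseteq\Im(\varphi)$. Here I would first record the telescoping identity $x^i\otimes m-1\otimes x^im=\sum_{j=0}^{i-1}\varphi(x^j\otimes x^{i-1-j}m)\in\Im(\varphi)$, valid for all $i\ge 0$ and $m\in M$: expanding $\varphi(x^j\otimes x^{i-1-j}m)=x^{j+1}\otimes x^{i-1-j}m-x^j\otimes x^{i-j}m$ and summing over $j$ telescopes to $x^i\otimes m-1\otimes x^im$. Given any $\xi=\sum_{i=0}^nx^i\otimes m_i\in\Ker(\psi)$, I would then split $\xi=\sum_{i=0}^n(x^i\otimes m_i-1\otimes x^im_i)+1\otimes\big(\sum_{i=0}^nx^im_i\big)$; the first bracketed sum lies in $\Im(\varphi)$ by the identity just proved, while the final term equals $1\otimes\psi(\xi)=1\otimes 0=0$ precisely because $\xi\in\Ker(\psi)$. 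Thus $\xi\in\Im(\varphi)$, which closes the argument.

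The only delicate point is the index bookkeeping in the telescoping identity; everything else (surjectivity, the zero composite, and the leading-coefficient injectivity argument) is formal and relies solely on the uniqueness of the expansion $\sum_i x^i\otimes m_i$. I would therefore devote most of the write-up to verifying the telescoping sum carefully and then assembling the decomposition of a kernel element, treating the remaining three claims as short direct checks.
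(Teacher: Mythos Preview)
Your proof is correct. The paper itself does not supply a proof of this lemma; it simply records the statement with a citation to \cite[Theorem~3.8.22]{fk16} and uses it as a black box in the proof of Theorem~\ref{s-wgd-poly}. Your argument is the standard direct verification: the telescoping identity $x^i\otimes m-1\otimes x^im=\sum_{j=0}^{i-1}\varphi(x^j\otimes x^{i-1-j}m)$ is precisely the device that handles middle exactness, and the leading-coefficient downward induction is the natural route to injectivity of $\varphi$. One small addition you might make for completeness is a line checking that $\varphi$ and $\psi$ are $R[x]$-linear for the $R[x]$-structure on $M[x]$ coming from the polynomial factor (so that the sequence lives in $R[x]$-modules, as is needed later); this is a one-line verification from $x\cdot(x^i\otimes m)=x^{i+1}\otimes m$, but worth stating since two distinct $x$-actions are in play.
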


\begin{theorem}\label{s-wgd-poly}
Let $R$ be a ring, $S$ a multiplicative subset of $R$. Then $S$-w.gl.dim$(R[x])=S$-w.gl.dim$(R)+1$.
\end{theorem}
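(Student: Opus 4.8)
The plan is to prove the two inequalities $S\text{-w.gl.dim}(R[x])\geq S\text{-w.gl.dim}(R)+1$ and $S\text{-w.gl.dim}(R[x])\leq S\text{-w.gl.dim}(R)+1$ separately. For the lower bound, I would apply Proposition \ref{s-fd-poly-3}(2) with the element $a=x\in R[x]$, which is neither a zero-divisor nor a unit, and the quotient ring $\overline{R[x]}=R[x]/xR[x]\cong R$. Under this identification $\overline{S}$ becomes $S$ and $\overline{S}\text{-w.gl.dim}(\overline{R[x]})=S\text{-w.gl.dim}(R)$. Thus Proposition \ref{s-fd-poly-3}(2) gives directly $S\text{-w.gl.dim}(R[x])\geq S\text{-w.gl.dim}(R)+1$, provided the right-hand side is finite; if $S\text{-w.gl.dim}(R)=\infty$ the inequality is vacuous and we must argue the infinite case separately (see below).

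For the upper bound, suppose $S\text{-w.gl.dim}(R)=n<\infty$. I would take an arbitrary $R[x]$-module $M$ and aim to bound $S\text{-}fd_{R[x]}(M)$ by $n+1$. The key tool is the short exact sequence of Lemma \ref{exact-s-poly}, namely $0\rightarrow M[x]\xrightarrow{\varphi} M[x]\xrightarrow{\psi} M\rightarrow 0$, which expresses an arbitrary $R[x]$-module $M$ as a cokernel built from the ``extended'' module $M[x]$. By Proposition \ref{s-fd-poly} we have $S\text{-}fd_{R[x]}(M[x])=S\text{-}fd_R(M)\leq n$, since $M$ viewed as an $R$-module has $S$-flat dimension at most $n=S\text{-w.gl.dim}(R)$. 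Feeding the short exact sequence into Proposition \ref{sfd-exact}(1) yields
\begin{align*}
S\text{-}fd_{R[x]}(M)&\leq 1+\max\{S\text{-}fd_{R[x]}(M[x]),\,S\text{-}fd_{R[x]}(M[x])\}\\
&\leq 1+n.
\end{align*}
Since $M$ was arbitrary, this gives $S\text{-w.gl.dim}(R[x])\leq n+1=S\text{-w.gl.dim}(R)+1$.

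Combining both inequalities settles the finite case. For the infinite case, if $S\text{-w.gl.dim}(R)=\infty$ then for each $n$ there is an $R$-module $M_n$ with $S\text{-}fd_R(M_n)>n$; by Proposition \ref{s-fd-poly} the $R[x]$-module $M_n[x]$ satisfies $S\text{-}fd_{R[x]}(M_n[x])=S\text{-}fd_R(M_n)>n$, so $S\text{-w.gl.dim}(R[x])=\infty$ as well, and the claimed equation holds with both sides infinite. The main obstacle I anticipate is the bookkeeping needed to invoke Proposition \ref{s-fd-poly-3}(2) cleanly: one must verify that the canonical isomorphism $R[x]/xR[x]\cong R$ carries the multiplicative set $S$ (regarded inside $R[x]$) onto $S$ in $R$, so that $\overline{S}$-weak global dimension of the quotient really coincides with $S\text{-w.gl.dim}(R)$, and that $x$ is indeed a non-zero-divisor and non-unit in $R[x]$ regardless of the structure of $R$. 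Both facts are elementary but essential for the lower bound; the upper bound, by contrast, is a direct and painless application of the already-established Proposition \ref{s-fd-poly} together with the fundamental sequence of Lemma \ref{exact-s-poly}.
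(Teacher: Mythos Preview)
Your proposal is correct and follows essentially the same route as the paper: both use Lemma~\ref{exact-s-poly} together with Proposition~\ref{s-fd-poly} and Proposition~\ref{sfd-exact} for the upper bound, and Proposition~\ref{s-fd-poly-3}(2) applied to $R\cong R[x]/xR[x]$ for the lower bound, with Proposition~\ref{s-fd-poly} handling the infinite case. The only difference is organizational---the paper first establishes the equivalence of finiteness and then treats the finite case, whereas you treat the finite case directly and dispose of the infinite case afterwards---but the ingredients and logic are identical.
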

\begin{proof} Let $M$ be an $R[x]$-module. Then, by Lemma \ref{exact-s-poly},  there is an exact sequence over $R[x]$:
$$0\rightarrow M[x]\rightarrow M[x]\rightarrow M\rightarrow 0.$$
By Proposition \ref{sfd-exact} and Proposition \ref{s-fd-poly},
\begin{center}
 $S$-$fd_R(M)\leq S$-$fd_{R[x]}(M)\leq 1+S$-$fd_{R[x]}(M[x])=1+S$-$fd_R(M)\qquad \qquad (\ast)$.
\end{center}
Thus if $S$-w.gl.dim$(R)< \infty$, then $S$-w.gl.dim$(R[x])<\infty$.

Conversely, if $S$-w.gl.dim$(R[x])< \infty$, then for any $R$-module $M$, $S$-$fd_R(M)=S$-$fd_{R[x]}(M[x])< \infty$ by Proposition \ref{s-fd-poly}. Therefore we have $S$-w.gl.dim$(R)< \infty$ if and only if $S$-w.gl.dim$(R[x])< \infty$.
Now we assume that both of these are finite. Then $S$-w.gl.dim$(R[x])\leq S$-w.gl.dim$(R)+1$ by $(\ast)$. Since $R\cong R[x]/xR[x]$, $S$-w.gl.dim$(R[x]) \geq S$-w.gl.dim$(R)+1$ by Proposition \ref{s-fd-poly-3}. Consequently, we have $S$-w.gl.dim$(R[x])=S$-w.gl.dim$(R)+1$.
\end{proof}
\begin{corollary}\label{s-wgd-poly-duo}
Let $R$ be a ring, $S$ a multiplicative subset of $R$. Then for any $n\geq 1$ we have
\begin{center}
$S$-w.gl.dim$(R[x_1,...,x_n])=S$-w.gl.dim$(R)+n$.
\end{center}
\end{corollary}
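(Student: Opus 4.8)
The plan is to prove the statement by induction on $n$, using Theorem \ref{s-wgd-poly} as both the base case and the engine of the inductive step. For $n=1$ the assertion is exactly Theorem \ref{s-wgd-poly}, so there is nothing to prove. For $n>1$ I would identify the iterated polynomial ring with a one-variable polynomial extension of a smaller iterated polynomial ring, writing $R[x_1,\dots,x_n]=R[x_1,\dots,x_{n-1}][x_n]$. The one point worth verifying here is that $S$, being a multiplicative subset of $R$, is naturally a multiplicative subset of the intermediate ring $R[x_1,\dots,x_{n-1}]$; indeed $S\subseteq R$ sits inside every such polynomial ring and remains multiplicatively closed. This is precisely what permits Theorem \ref{s-wgd-poly} to be applied with base ring $R[x_1,\dots,x_{n-1}]$ in place of $R$.

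Applying Theorem \ref{s-wgd-poly} with $R$ replaced by $R[x_1,\dots,x_{n-1}]$ then gives
\begin{center}
$S$-w.gl.dim$(R[x_1,\dots,x_n])=S$-w.gl.dim$(R[x_1,\dots,x_{n-1}])+1$,
\end{center}
while the induction hypothesis applied to $R[x_1,\dots,x_{n-1}]$ yields
\begin{center}
$S$-w.gl.dim$(R[x_1,\dots,x_{n-1}])=S$-w.gl.dim$(R)+(n-1)$.
\end{center}
Combining these two equalities produces the claimed formula $S$-w.gl.dim$(R[x_1,\dots,x_n])=S$-w.gl.dim$(R)+n$. Since Theorem \ref{s-wgd-poly} is stated as an unconditional equality (its proof handles the infinite case through the biconditional that $S$-w.gl.dim is finite for $R[x]$ exactly when it is finite for $R$), the induction goes through uniformly whether or not the dimensions are finite, with the usual convention $\infty+1=\infty$.

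I do not anticipate any serious obstacle: the argument is a clean induction, and all the substantive content has already been absorbed into Theorem \ref{s-wgd-poly}. The only point meriting a moment's care is the bookkeeping of the multiplicative set under the successive polynomial extensions, ensuring that the same $S$ is legitimately regarded as a multiplicative subset at each stage of the tower $R\subseteq R[x_1]\subseteq\dots\subseteq R[x_1,\dots,x_n]$.
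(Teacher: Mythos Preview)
Your proposal is correct and matches the paper's intended approach: the corollary is stated in the paper without proof, as an immediate consequence of Theorem~\ref{s-wgd-poly} by induction on $n$, which is exactly what you do. Your only extra care---checking that $S$ remains a multiplicative subset of each intermediate polynomial ring---is the one detail worth noting, and you handle it correctly.
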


\end{document}